\newtheorem{theorem}{Theorem}[section]
\newtheorem{lemma}{Lemma}[section]
\newtheorem{proposition}{Proposition}[section]
\theoremstyle{definition}
\newtheorem{definition}{Definition}[section]
\newtheorem{remark}{Remark}[section]
\numberwithin{equation}{section}
\newcommand{\norm}[2]{{\left\|#1\right\|}_{#2}}
\newcommand{\cnk}{C_{\nu,k}}
\newcommand{\jnk}{j_{\nu,k}}
\newcommand{\jk}{j_{0,k}}
\newcommand{\NN}{\mathbb{N}}
\newcommand{\RR}{\mathbb{R}}
\newcommand{\CC}{\mathbb{C}}
\title[Boundary controllability one-dimensional singular heat equation]{Boundary controllability for a one-dimensional heat equation with a singular inverse-square potential}
\author{Umberto Biccari}
\address{DeustoTech, University of Deusto, 48007 Bilbao, Basque Country, Spain.}
\address{Facultad de Ingenier\'{\i}a, Universidad de Deusto, Avda Universidades 24, 48007 Bilbao, Basque Country, Spain.}
\email{umberto.biccari@deusto.es, u.biccari@gmail.com}
\subjclass[2010]{35K05, 35K67, 93B05, 93B07.}
\keywords{Heat equation, singular potential, boundary controllability, moment method.}
\thanks{
This project has received funding from the European Research Council (ERC) under the European Union's Horizon 2020 research and innovation programme (grant agreement No. 694126-DyCon). Moreover, this work was partially supported by the Grants MTM2014-52347, MTM2017-92996 of MINECO (Spain), and AFOSR Grant FA9550-18-1-0242.}
\begin{document}

\bibliographystyle{acm}

\begin{abstract}
We analyze controllability properties for the one-dimensional heat equation with singular inverse-square potential
\begin{align*}
	u_t-u_{xx}-\frac{\mu}{x^2}u=0,\;\;\; (x,t)\in(0,1)\times(0,T).
\end{align*}
For any $\mu<1/4$, we prove that the equation is null controllable through a boundary control $f\in H^1(0,T)$ acting at the singularity point $x=0$. This result is obtained employing the moment method by Fattorini and Russell.
\end{abstract}

\maketitle

\section{Introduction and main results}\label{intro}
Let $T>0$ and set $Q:=(0,1)\times(0,T)$. The aim of this work is to prove boundary controllability for a one dimensional heat equation with a singular inverse-square potential. In particular we are interested in the case in which the potential arises at the boundary and the  control is located on the singularity point. In other words, given the operator 
\begin{align}\label{singular_op}
	A_{\mu}:=-d_x^2-\frac{\mu}{x^2}I,\;\;\;\mu\in\RR,
\end{align} 
we are going to consider the heat equation
\begin{align}\label{heat_hardy_nhb}
	\begin{cases}
		\displaystyle u_t-u_{xx}-\frac{\mu}{x^2}u=0, & (x,t)\in Q
		\\[5pt] 
		x^{-\alpha}u(x,t)\big|_{x=0}=f(t),\;u(1,t)=0, & t\in (0,T)
		\\[5pt] 
		u(x,0)=u_0(x), & x\in (0,1)
	\end{cases}
\end{align}
with the intent of proving that it is possible to find a control function $f$ in an appropriate functional space $X$ such that solutions of \eqref{heat_hardy_nhb} corresponding to initial data $u_0\in L^2(0,1)$ satisfy
\begin{align}\label{control_def}
	u(x,T)=u_T(x).
\end{align}

A first important aspect that we want to underline is the non standard formulation of the boundary conditions in \eqref{heat_hardy_nhb}. Indeed, due to the presence of the singularity at $x=0$ it turns out that it is not possible to impose a boundary condition of the type $u(0,t)=f(t)\neq 0$. Instead, we need to introduce the weighted boundary condition 
\begin{align}\label{1d_weighted_bc}
	\left. x^{-\alpha}u(x,t)\right|_{x=0}=f(t),
\end{align}  
where, for all $\mu\leq 1/4$, the coefficient $\alpha$ is given by
\begin{align}\label{alpha}
	\alpha=\alpha(\mu):=\frac{1}{2}\left(1-\sqrt{1-4\mu}\;\right).
\end{align}

This fact is justified by the observation that the general solution of the second order elliptic equation $u_{xx}+(\,\mu/x^2)u=0$ may be calculated explicitly and it is given by
\begin{align*}
	u(x) = C_1x^{\,\frac{1}{2}-\frac{1}{2}\sqrt{1-4\mu}} + C_2x^{\,\frac{1}{2}+\frac{1}{2}\sqrt{1-4\mu}},
\end{align*}
with $(C_1,C_2)\neq(0,0)$. Therefore,
\begin{align}\label{1d_solution_trace}
	\begin{cases}
		u(0)=0, & \textrm{ for } \mu > 0,
		\\
		u(0)=\pm\infty, & \textrm{ for } \mu < 0,
	\end{cases}
\end{align}
where the sign of $u(0)$ for $\mu<0$ is given by the sign of the constant $C_1$. On the other hand, we have
\begin{align*}
	\lim_{x\to 0^+}x^{\,-\frac{1}{2}+\frac{1}{2}\sqrt{1-4\mu}}u(x) = \lim_{x\to 0^+}x^{-\alpha}u(x) = \left\{\begin{array}{ll} 
C_1, & \textrm{ for } \mu <1/4,
	\\
	C_1+C_2, & \textrm{ for } \mu =1/4.
	\end{array}\right.
\end{align*}

We remark that in \eqref{1d_solution_trace} we are not considering the case $\mu=0$. This case, indeed, corresponds simply to a one-dimensional Laplace equation for which, of course, we do not need any further analysis. Moreover, we notice that for $\mu=0$ we have also $\alpha=0$ and, therefore, the boundary condition \eqref{1d_weighted_bc} becomes $u(0,t)=f(t)$, which is consistent with the classical theory. Finally, it is evident from the argument above that $x^{-\alpha}$ is the sharp weight for defining a non-homogeneous boundary condition at $x=0$.  As we shall see with more details later, the parameter $\alpha$ plays a fundamental role in our analysis.

As it by now well known, when dealing with equations of the type of \eqref{heat_hardy_nhb} the constant $\mu$ in the definition of the operator $A_{\mu}$ plays a crucial role. In fact, even if in principle it could assume every real value, it has been shown (see, e.g., \cite{vazquez2000hardy} and the references therein) that there is an upper bound $\mu^*$ above which this kind of equations is ill-posed. This upper bound is given by the critical constant in the Hardy inequality, guaranteeing that, for every $z\in H_0^1(0,1)$, we have $z/x\in L^2(0,1)$ and 
\begin{align}\label{hardy}
	\mu^*\int_0^1\frac{z^2}{x^2}dx\leq\int_0^1z_x^2dx.
\end{align} 

Moreover, it is also well known that, in the one-dimensional case, this critical Hardy constant takes the value  $\mu^*=1/4$. 

Singular inverse-square potentials arise in quantum cosmology (\cite{berestycki1997existence}), in electron capture problems (\cite{giri2008electron}), but also in the linearization of reaction-diffusion problems involving the heat equation with supercritical reaction term (\cite{azorero1998hardy}). Also for these reasons, starting from the pioneering work \cite{baras1984heat}, evolution problems involving this kind of potentials have been intensively studied in the last decades. 

Moreover, it is by now well known that equations of the type of \eqref{heat_hardy_nhb} are closely related, through an appropriate change of variables (see, for instance, \cite[Section 4]{martinez2006carleman}), to another class of PDE problems with variable degenerate coefficients, i.e. in the form 
\begin{align}\label{1d_heat_deg}
	u_t-(a(x)u_x)_x=0,\;\;\;\alpha\in(0,1),\;\;\;(x,t)\in Q,
\end{align}
where $a(x)$ vanishes at a certain $x_0\in[0,1]$.

In the recent past, it has been given many attention to this other kind of equations. In particular, several controllability results have been obtained, employing both distributed and boundary controls.

Concerning interior controllability, among other works, we can mention \cite{cannarsa2005null,cannarsa2008carleman,martinez2006carleman}, where the authors obtained the null-controllability for \eqref{1d_heat_deg} by means of a distributed control supported in a non-empty subset $\omega\subset (0,1)$. Moreover, it is worth to mention also the book \cite{cannarsa2016global}, which contains a complete survey of the existing results of interior controllability for \eqref{1d_heat_deg}, and where it is also treated the multi-dimensional case.

Concerning instead the boundary controllability, this problem was firstly addressed in \cite{cannarsa2012unique}, where the authors considered the case $a(x)=x^{\,\beta}$, $\beta\in (0,1)$ and proved approximate controllability acting from $x=0$.

In \cite{cannarsa2015cost,gueye2014exact}, it is considered again the case $a(x)=x^{\,\beta}$, $\beta\in (0,1)$, and it is proved the null controllability for \eqref{1d_heat_deg}, again from $x=0$. In particular, in \cite{cannarsa2015cost}, it is also presented an analysis of the cost of null controllability and of its dependence on the parameter $\beta$ and on the time horizon $T$. Finally, in \cite{cannarsa2018cost}, it is studied the case $\beta\in [1,2)$ and analogous results as in \cite{cannarsa2015cost} are obtained. 

Also for evolution equations with singular inverse-square potentials the controllability problem has already been addressed in the past. Among other works, we recall here  \cite{biccari2016null,cazacu2012schrodinger,cazacu2014controllability,ervedoza2008control,vancostenoble2008null,vancostenoble2009hardy}. 
In all these articles, the authors analyzed heat and wave equations involving a potential of the type $\mu/|x|^2$ on a bounded regular domain $\Omega\subset\RR^N$, $N\geq 3$, and proved null controllability choosing a control region inside the domain, away from the singularity point $x=0$. In particular, in \cite{cazacu2014controllability}, it is considered the special case of a singularity located on the boundary of the domain $\Omega$, while \cite{biccari2016null} treats the case of a singularity distributed all over the boundary of $\Omega$. In addition, it is worth to mention also \cite{vancostenoble2011improved}, where the author treats a problem involving degenerate coefficients and singular potentials at the same time. 

Finally, we have to mention that the boundary controllability for a one-dimensional heat equation with singular potential has recently been treated in \cite{martinez2017ccost}. Nevertheless, in that work the authors consider the case of a potential located on one extrema of the space domain (namely, $x=0$) and a control located at the other extrema ($x=1$). The main novelty of our work is that  we consider controls located at the same point in which the singularity of the potential arises. To the best of our knowledge, there are no results of controllability acting from the singularity point. The analysis of problem \eqref{heat_hardy_nhb} that we are presenting is a first step in this direction, in which the two issues just mentioned appear together. Indeed, we are going to prove that, for all $\mu<1/4$, it is possible to control the equation from the boundary, and in particular from the extrema where the singularity of the potential arises. 

Our approach will rely on the well celebrated moment method, which is by now a very classical technique for treating controllability problems for certain types of one-dimensional evolution equations. Moreover, our discussion will have many points in common with \cite{cannarsa2015cost}, where the same arguments are applied for proving the null controllability of \eqref{1d_heat_deg} acting from the degeneracy point of the coefficient $a$. 
This is not surprising since, as we mentioned, these two classes of problems are strictly related through an explicit change of variables. 

On the other hand, as we will explain with more details in Appendix \ref{appendix} at the end of this work, passing through this mentioned change of variables, we would be able to obtain a less general result. Indeed, this approach has limitations, in the sense that it is valid
under more restrictive assumptions on the coefficient $\mu$. In more detail, we would have to assume that $\mu\in [0,1/4)$, while our results are true for all $\mu<1/4$. For this reason, it is worth to analyze the controllability of \eqref{1d_heat_deg} directly, without relying on existing results for equations with degenerate coefficients. 

The paper is organized as follows. In Section \ref{results}, we state precisely the main results that we obtained. In Section \ref{well_posedness_sec}, we analyze the well-posedness for the equation (\ref{heat_hardy_nhb}), reducing it to an equation with homogeneous boundary conditions and a source term. Section \ref{spectrum_sec} is devoted to a spectral analysis of the operator $A_{\mu}$, which will be fundamental for developing the moment method. In Section \ref{moment_sec}, we present the moment problem, its solution and the proof of the controllability result. In Section \ref{cost_sec}, we analyze the cost of null controllability for our equation. Section \ref{target_reg_sec} is devoted to the proof of regularity results on the set of reachable targets and to show that the only target that is reachable for all values of the coefficient $\mu$ is $u_T=0$. Finally, in Section \ref{open_pb}, we present some open problem and perspectives related to our work.

\section{Main results}\label{results}
We recall that the main problem that we will address is the boundary controllability for equation \eqref{heat_hardy_nhb}, employing a  control located at the singularity point. In other words, given $u_0,\,u_T\in L^2(0,1)$ we wish to find a control function $f$ that drives the solution $u$ of \eqref{heat_hardy_nhb} from $u_0$ to $u_T$ in a finite time $T>0$. We will show that this controllability property is satisfied employing a control $f\in H^1(0,T)$. Moreover, we will refer to the functions $u_T$ that can be reached from $u_0$ through $f$ as \textit{reachable targets} and we will indicate their set with $\mathcal{R}_T$.

As we mentioned, this controllability result will be obtained using the moment method by Fattorini and Russel (\cite{fattorini1971exact,fattorini1974uniform}). The starting point of this method is to decompose the initial datum $u_0$ and the target $u_T$ in the basis of the eigenfunctions $(\Phi_k)_{k\geq 1}$ associated to the operator $A_{\mu}$, i.e. 
\begin{align*}
	u_0(x)=\sum_{k\geq 1} \rho_k^0\Phi_k(x),\;\;\; u_T(x)=\sum_{k\geq 1} \rho_k^T\Phi_k(x),
\end{align*}
with $(\,\rho_k^0)_{k\geq 1},\,(\,\rho_k^T)_{k\geq 1}\in\ell^2(\NN^*)$. The controllability of the equation is then reduced to an algebraic condition for the Fourier coefficients $\rho_k^0$ and $\rho_k^T$.

\noindent The main results of this paper will be the following.
\begin{enumerate}
	\item Our first concern is the null controllability of the parabolic equation \eqref{heat_hardy_nhb}. In particular, in Theorem \ref{control_thm}, for all $\mu<1/4$ we are going to prove the existence of a dense subset $\mathcal{P}_{\mu,T}\subset L^2(0,1)$ such that any $u_T\in\mathcal{P}_{\mu,T}$ is reachable with $H^1(0,T)$ controls. In other words, we will have that, for all $\mu<1/4$, $u_0\in L^2(0,1)$ and $u_T\in\mathcal{P}_{\mu,T}$, there exists a control function $f\in H^1(0,T)$ such that the corresponding solution of \eqref{heat_hardy_nhb} satisfies \eqref{control_def}. 
	
	\item Secondly, since the target $u_T=0$ will show up to be reachable for all values of the coefficient $\mu<1/4$, we will focus on the analysis of the cost of null controllability with respect to $\mu$ and, in Theorem \ref{cost_thm}, we will show that this cost blows up as $\mu\to 1/4^-$ and $T\to 0^+$.
	
	\item As a third result, we will obtain regularity properties for the targets $u_T$. In the case of the classical heat equation (i.e. when $\mu=0$) Fattorini and Russel observed in \cite{fattorini1971exact} that a reachable target is, in fact, the restriction to the interval $[0,1]$ of an analytic function. In Proposition \ref{target_reg_prop} below, we will show that a similar regularity result can be proved also in our case. 
	
	\item Finally, since the reachable set $\mathcal{R}_T$ contains a subset $\mathcal{P}_{\mu,T}$ dense in $L^2(0,1)$, we will analyse which are the targets $u_T$ that could be reached for all values of the coefficient $\mu<1/4$. In this framework, we will show in Proposition \ref{reachable_prop} that 
	\begin{align*}
		\bigcap_{\mu<1/4} \mathcal{P}_{\mu,T} = \{0\},
	\end{align*}
	meaning that the null state is the only target that we are sure that can be reached for all coefficients $\mu<1/4.$ 
\end{enumerate}

\noindent In what follows, we discuss with more details the results introduced above.

\subsection{The controllability problem}
The main result of this work will be the following.
\begin{theorem}\label{control_thm}
Let $\mu<1/4$. Consider the target function $u_T\in L^2(0,1)$ and the sequence of its Fourier coefficients $(\,\rho_k^T)_{k\geq 1}$. Then, there exists a constant $P>0$, not depending on $\mu$, such that, if
\begin{align}\label{fourier_cond_target}
	\sum_{k\geq 1}|\,\rho_k^T|k^{\frac{1}{2}-\frac{1}{2}\sqrt{1-4\mu}}e^{P\pi k} < +\infty,
\end{align}
then $u_T$ is a reachable target: given $T>0$ and $u_0\in L^2(0,1)$, there exists a control function $f\in H^1(0,T)$ such that the solution of \eqref{heat_hardy_nhb} satisfies $u(x,T)=u_T(x)$.
\end{theorem}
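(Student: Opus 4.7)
The argument follows the classical moment method of Fattorini--Russell, adapted to account for the weighted boundary condition \eqref{1d_weighted_bc}. First I would carry out the reduction to an equation with homogeneous boundary data plus a source term (anticipated in Section \ref{well_posedness_sec}) and work in the eigenbasis $(\Phi_k)_{k\geq 1}$ of $A_\mu$ obtained in Section \ref{spectrum_sec}. Setting $\nu := (1/2)\sqrt{1-4\mu}=1/2-\alpha$, the eigenfunctions are of the form $\Phi_k(x)=c_k\sqrt{x}\,J_\nu(\jnk x)$ with eigenvalues $\lambda_k=\jnk^{\,2}$, normalisation $c_k\asymp \sqrt{\jnk}\asymp\sqrt{k}$, and gap $\sqrt{\lambda_{k+1}}-\sqrt{\lambda_k}\to\pi$.

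Next I would derive the moment equations. Testing \eqref{heat_hardy_nhb} against the backward solution $e^{-\lambda_k(T-t)}\Phi_k(x)$, two integrations by parts in $x$ leave only the boundary contribution at $x=0$. Combining the boundary asymptotics $u(x,t)\sim f(t)x^\alpha$ prescribed by \eqref{1d_weighted_bc} with $\Phi_k(x)\sim A_k\,x^{1-\alpha}$ near $0$, where $A_k:=c_k\jnk^{\,\nu}/(2^\nu\Gamma(\nu+1))\asymp k^{1-\alpha}$, a short calculation gives the system
\begin{align*}
\int_0^T f(t)\,e^{-\lambda_k(T-t)}\,dt \;=\; \frac{\rho_k^T-\rho_k^0\,e^{-\lambda_k T}}{\beta_k},\qquad k\geq 1,
\end{align*}
with $\beta_k:=(2\alpha-1)A_k\asymp k^{1-\alpha}$. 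Thus controllability reduces to solving this moment problem in $H^1(0,T)$.

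The heart of the proof is Step 3: the construction of a family $(q_k)_{k\geq 1}\subset H^1(0,T)$ biorthogonal to $\bigl(e^{-\lambda_k(T-t)}\bigr)_{k\geq 1}$ in $L^2(0,T)$, together with the sharp estimate
\begin{align*}
\norm{q_k}{L^2(0,T)}+\lambda_k^{-1/2}\norm{q_k'}{L^2(0,T)}\;\leq\; M\,e^{P\sqrt{\lambda_k}}
\end{align*}
for constants $M,P>0$ independent of $k$ (and of $\mu$). This relies on the standard Bessel-zero asymptotics $\jnk=k\pi+O(1)$ and the uniform spectral gap, and follows from the Fattorini--Russell/Seidman-type results already exploited in \cite{cannarsa2015cost} for the degenerate coefficient case. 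Given such a family, I would then define
\begin{align*}
f(t):=\sum_{k\geq 1}\frac{\rho_k^T-\rho_k^0\,e^{-\lambda_k T}}{\beta_k}\,q_k(t),
\end{align*}
which by biorthogonality automatically solves the moment equations. Plugging in the bounds on $\|q_k\|$ and $\|q_k'\|$, using $\beta_k\asymp k^{1-\alpha}$ and $\sqrt{\lambda_k}\sim k\pi$, one finds
\begin{align*}
\norm{f}{H^1(0,T)}\;\lesssim\;\sum_{k\geq 1}\frac{|\rho_k^T|+|\rho_k^0|e^{-\lambda_k T}}{k^{1-\alpha}}\,k\,e^{P\pi k}\;\lesssim\;\sum_{k\geq 1}|\rho_k^T|\,k^{\alpha}e^{P\pi k}+C_{T,u_0},
\end{align*}
so the hypothesis \eqref{fourier_cond_target} is precisely what is needed to guarantee $f\in H^1(0,T)$ and conclude.

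The main obstacle I anticipate is Step 3: producing the biorthogonal family with the sharp constant $P\pi$ in the exponential and with an $H^1$-bound whose polynomial prefactor matches exactly the power $k^{1/2-(1/2)\sqrt{1-4\mu}}=k^\alpha$ in the hypothesis. The $L^2$-bound is classical, but the $H^1$-regularity of the control (as opposed to mere $L^2$) and the $\mu$-uniformity of $P$ require a careful tracking of the Bessel asymptotics in the construction of $(q_k)$ and will likely occupy most of Sections \ref{spectrum_sec}--\ref{moment_sec}.
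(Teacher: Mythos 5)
Your proposal follows the same overall route as the paper: the lifting \eqref{cv} to homogeneous boundary data, the Bessel spectral analysis of Section \ref{spectrum_sec}, the reduction to a moment problem through the adjoint solutions $v_k=\Phi_k(x)e^{\lambda_k(t-T)}$, and a biorthogonal family with $e^{P\sqrt{\lambda_k}}$ bounds; your exponent bookkeeping ($\Phi_k\sim A_k x^{1-\alpha}$ near $0$, $\cnk\asymp\jnk^{1/2}$, gap $\to\pi$) does reproduce the weight $k^{\frac12-\frac12\sqrt{1-4\mu}}e^{P\pi k}$ of \eqref{fourier_cond_target}. The genuine divergence is in how you obtain $f\in H^1(0,T)$, and this is precisely the step you flag as the main obstacle. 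You solve the moment equations for $f$ itself and ask for a biorthogonal family $(q_k)$ bounded in $H^1$ with $\norm{q_k'}{L^2(0,T)}\lesssim\lambda_k^{1/2}e^{P\sqrt{\lambda_k}}$; such derivative estimates are not what the Fattorini--Russell construction or \cite{cannarsa2015cost} deliver, so as written this step is a gap. The paper circumvents it: it integrates by parts once more in time so that the moment condition bears on $f'$ (equation \eqref{moment_cond_H1}), takes the usual $L^2$ family $(\sigma_k)$ biorthogonal to $(e^{\lambda_k t})$, and exploits the additional zero-mean property \eqref{sigma_int} from \cite[Corollary 1]{cannarsa2015cost}. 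Defining $f(t)=\int_0^t g(s)\,ds$ with $g=\sum_k\frac{\lambda_k}{r_k}(\rho_k^0-\rho_k^Te^{\lambda_k T})\sigma_k$ as in \eqref{f_def} then yields $f\in H^1(0,T)$ with $f(0)=f(T)=0$ using only $L^2$ bounds on the $\sigma_k$, and the extra factor $\lambda_k$ divided by the boundary coefficient is exactly what produces the polynomial weight in \eqref{fourier_cond_target}. If you keep your formulation, you must either prove the $H^1$ biorthogonal estimate yourself or switch to the paper's antiderivative device.

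Two further remarks. The property $f(T)=0$ is not cosmetic in the paper: the rigorous verification that the control steers $u_0$ to $u_T$ is performed on the lifted variable $\psi=u-x^{\alpha}p(x)f(t)/p(0)$, and $u(\cdot,T)=u_T$ is recovered from $\psi(\cdot,T)=u_T$ precisely because $f(T)=0$; in your direct formulation you would instead have to justify the duality identity for the non-homogeneous boundary problem near the singular endpoint, which is the delicate part and should not be left implicit. Finally, your boundary bracket retains both terms $uv_{k,x}-u_xv_k$ at $x=0$ (giving $\beta_k=(2\alpha-1)A_k\asymp k^{1-\alpha}$), whereas the paper keeps only $r_k=\lim_{x\to0^+}x^{\alpha}\Phi_k'(x)$; the resulting coefficients differ by at most a fixed power of $k$, which is immaterial against the exponential weight $e^{P\pi k}$ and leads to the same reachability condition, so this does not affect the substance of the theorem.
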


\begin{remark}\label{range_rem}
Notice that in the statement of Theorem \ref{control_thm} we are not allowing $\mu$ to reach the critical value $\mu^*=1/4$. Nevertheless, this assumption is justified by the fact that, as we are going to show in Section \ref{cost_sec}, as $\mu\to 1/4^-$ the cost of null controllability for \eqref{heat_hardy_nhb} blows up as $(1-4\mu)^{-1/2}$. This, of course, implies that, in the critical case, the null controllability problem would have an infinite cost and, therefore, it is not achievable.
\end{remark}

\begin{remark}
A condition of the type of \eqref{fourier_cond_target} already appears in the original works \cite{fattorini1971exact,fattorini1974uniform} of Fattorini and Russell on the moment method. 
Moreover, we notice that \eqref{fourier_cond_target} is trivially satisfied in the case o null controllability, that is, when the target function is $u_T=0$. Finally, we emphasize the fact that the constant $P$ appearing in \eqref{fourier_cond_target} does not depend on $\mu$. This is relevant in the analysis of the behaviour of the reachable set with respect to this coefficient. 
\end{remark}

\subsection{The cost of null controllability}
The second result that we were mentioning before concerns the analysis of the cost of null controllability for equation \eqref{heat_hardy_nhb}, and it will provide a justification to the impossibility of establishing Theorem \ref{control_thm} when $\mu$ is critical. 

In particular, we are going to prove that the cost of null controllability blows up as $\mu\to 1/4^-$, thus meaning that null-controllability fails in this critical case.

First of all, let us introduce the concept of the cost of controllability: given $u_0\in L^2(0,1)$, we consider the set of admissible controls that drive the solution to zero in time $T$ as
\begin{align*}
	\mathcal{U}_{\textrm{ad}}(\,\mu,u_0):=\left\{f\in H^1(0,T)\,\Big|\, \textrm{ the solution } u \textrm{ of \eqref{heat_hardy_nhb} satisfies } \eqref{control_def}\right\}.
\end{align*}
The controllability cost is then defined as
\begin{align}\label{control_cost}
	C^{H^1}(\,\mu,u_0):= \inf_{f\in\mathcal{U}_{\textrm{ad}}(\mu,u_0)}\norm{f}{H^1(0,T)}.
\end{align}
Moreover, we can also consider a global notion of the controllability cost
\begin{align}\label{control_cost_global}
	C^{H^1}_{\textrm{bd-ctr}}(\,\mu):= \sup_{\norm{u_0}{L^2(0,1)}=1}C^{H^1}(\,\mu,u_0).
\end{align}

We mention that similar notions has already being considered, for instance in \cite{fernandez2000cost}. We are going to prove the following.
\begin{theorem}\label{cost_thm}
Let $C^{H^1}(\,\mu,u_0)$ and $C^{H^1}_{\textrm{bd-ctr}}(\,\mu)$ be defined as in \eqref{control_cost} and \eqref{control_cost_global}. Then , the following assertions hold:
\begin{itemize}
	\item[(i)] Given $u_0\in L^2(0,1)$, there exists $C_1(u_0)$, independent of $\mu$, and $C$, independent of $u_0$ and $\mu$, such that 
	\begin{align}\label{control_cost_est}
		\frac{C_1(u_0)}{\sqrt{1-4\mu}}\frac{1}{\sqrt{e^{CT}-1}}\leq C^{H^1}(\,\mu,u_0) \leq \frac{Ce^{\frac{C}{T}}}{\sqrt{1-4\mu}}\norm{u_0}{L^2(0,1)}.
	\end{align}
	\item[(ii)] There exists a positive constant $C$, independent of $\mu$, such that 
	\begin{align}\label{control_cost_global_est}
		\frac{Ce^{\frac{C}{T}}}{\sqrt{1-4\mu}}\frac{1}{T(T+1)}\leq C^{H^1}_{\textrm{bd-ctr}}(\,\mu) \leq \frac{Ce^{\frac{C}{T}}}{\sqrt{1-4\mu}}.
	\end{align}	
\end{itemize}
\end{theorem}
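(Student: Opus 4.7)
The strategy is to leverage the explicit representation of the control $f$ produced by the moment method used to establish Theorem \ref{control_thm}. Denote by $(\Phi_k)_{k\geq 1}$ the orthonormal eigenbasis of $A_\mu$ with eigenvalues $(\lambda_k)_{k\geq 1}$ (both to be computed in Section \ref{spectrum_sec} in terms of Bessel functions of index $\nu=\tfrac{1}{2}\sqrt{1-4\mu}$), and by $\omega_k$ the boundary weight at $x=0$ arising in the duality pairing associated with the condition \eqref{1d_weighted_bc}. Letting $(q_k)_{k\geq 1}$ be a biorthogonal family to $(e^{-\lambda_k t})_{k\geq 1}$ in $L^2(0,T)$, any null control for $u_0=\sum_k\rho_k^0\Phi_k$ admits the representation
\begin{equation*}
f(t)=-\sum_{k\geq 1}\frac{\rho_k^0\,e^{-\lambda_k T}}{\omega_k}\,q_k(T-t),
\end{equation*}
so the whole analysis reduces to quantitative control of $|\omega_k|^{-1}$ and of $\|q_k\|_{H^1(0,T)}$.

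For the upper bound in (i), I would first exploit the Bessel representation of $\Phi_k$ to show that the normalization constants $c_{\nu,k}$ satisfy $c_{\nu,k}\leq C\sqrt{1-4\mu}\,p(k)$ uniformly in $k$, whence
\begin{equation*}
|\omega_k|^{-1}\leq\frac{C}{\sqrt{1-4\mu}}\,p(k),
\end{equation*}
with $p(k)$ of polynomial growth; the factor $(1-4\mu)^{-1/2}$ is precisely what blows up as $\nu\to 0^+$. Coupling this with the Fattorini--Russell type estimate $\|q_k\|_{H^1(0,T)}\leq C\,e^{C/T}\,e^{\eta\sqrt{\lambda_k}}$ (for arbitrarily small $\eta>0$), the decay $e^{-\lambda_k T}$ absorbs both $p(k)$ and $e^{\eta\sqrt{\lambda_k}}$, and the resulting $\ell^1$ sum in $k$ is controlled by $\|u_0\|_{L^2(0,1)}$. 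This yields the right-hand inequality of \eqref{control_cost_est}.

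The lower bound in (i) is derived by testing the first mode of the moment identity: any admissible $f$ must satisfy $\rho_1^0 e^{-\lambda_1 T}+\omega_1\int_0^T f(t)\,e^{-\lambda_1(T-t)}\,dt=0$. Combining Cauchy--Schwarz with the matching upper asymptotic $|\omega_1|\leq C\sqrt{1-4\mu}$ and the elementary identity $\|e^{-\lambda_1(T-\cdot)}\|_{L^2(0,T)}^2=(1-e^{-2\lambda_1 T})/(2\lambda_1)$, one obtains
\begin{equation*}
\|f\|_{H^1(0,T)}\geq\|f\|_{L^2(0,T)}\geq\frac{C_1(u_0)}{\sqrt{1-4\mu}}\,\frac{1}{\sqrt{e^{CT}-1}},
\end{equation*}
with $C=2\lambda_1$ and $C_1(u_0)$ proportional to $|\rho_1^0|$. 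Concerning (ii), the upper bound follows at once from (i); the lower bound is obtained by maximizing over the mode used in the preceding test, choosing $u_0=\Phi_{n(T)}$ with $n(T)$ optimized so that $e^{-\lambda_{n(T)}T}$ balances the $k$-dependent factors in $\omega_k$ and $p(k)$, which manufactures the extra $1/(T(T+1))$ factor.

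The main technical obstacle is to produce the asymptotics of $c_{\nu,k}$ and $\omega_k$ uniformly in \emph{both} $\nu\to 0^+$ and $k\to\infty$. Standard Bessel zero asymptotics apply only for fixed $\nu$; here one must track their degeneration as $\nu\to 0^+$ carefully enough to isolate the clean prefactor $(1-4\mu)^{-1/2}$ without polluting it with hidden $\mu$-dependent constants. Matching this uniformity with the biorthogonal family bounds, whose constants depend on the spectral gap $\sqrt{\lambda_{k+1}}-\sqrt{\lambda_k}$ --- itself sensitive to $\nu$ --- is the delicate point of the argument.
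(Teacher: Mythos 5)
Your overall strategy (an explicit moment-method control built from a biorthogonal family, plus Cauchy--Schwarz on a single mode of the moment identity for the lower bounds) is the same as the paper's, but several of the quantitative claims on which your argument hinges are wrong or unsupported. First, you locate the degeneration $(1-4\mu)^{-1/2}$ in the normalization constants, claiming $c_{\nu,k}\leq C\sqrt{1-4\mu}\,p(k)$. This is false: $C_{\nu,k}=\sqrt{2}/|J_{\nu}'(j_{\nu,k})|\sim(\pi j_{\nu,k})^{1/2}$ stays bounded above and below uniformly as $\nu=\tfrac12\sqrt{1-4\mu}\to0^+$ (the paper uses Landau's bound $|J_{\nu+1}(j_{\nu,k})|\le(1+\nu)^{-1/3}$ and a uniform lower bound on $|J_{\nu}'(j_{\nu,k})|$ precisely to this effect). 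The $1/\nu$ actually comes from the trace coefficient $r_k=\lim_{x\to0^+}x^{\alpha}\Phi_k'(x)=\frac{C_{\nu,k}j_{\nu,k}^{\nu}}{2^{\nu}\Gamma(\nu+1)}\left(\tfrac12+\nu j_{\nu,k}\right)$, whose large-$k$ behaviour carries the vanishing prefactor $A_{\nu}=\nu\sqrt{\pi}/(2^{\nu}\Gamma(\nu+1))$; it is the bound $\left(\tfrac12+\nu j_{\nu,k}\right)^{-1}\le(\nu j_{\nu,k})^{-1}$ that produces $1/\sqrt{1-4\mu}$ in the upper estimate. For the same reason your lower-bound step ``$|\omega_1|\le C\sqrt{1-4\mu}$'' fails: for fixed $k=1$ the quantity $r_1$ tends to a \emph{positive} limit as $\mu\to1/4^-$, so testing the first mode alone does not exhibit the blow-up; moreover $\rho_1^0=(u_0,\Phi_1)_{L^2(0,1)}$ may vanish. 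The paper instead passes to the limit $\mu\to1/4^-$, uses that $(\Phi_{0,k})_{k\ge1}$ is an orthonormal basis to find a mode $k_0$ with $|\rho_{k_0}^0|\ge C_0(u_0)>0$ for $\mu$ near $1/4$, and runs the Cauchy--Schwarz argument on that mode.

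Second, your control representation requires $H^1(0,T)$ bounds $\|q_k\|_{H^1(0,T)}\le Ce^{C/T}e^{\eta\sqrt{\lambda_k}}$ on the biorthogonal family, which is not what Fattorini--Russell or \cite{cannarsa2015cost} provide. The paper avoids this by setting $f(t)=\int_0^t g(s)\,ds$ with $g$ equal to the biorthogonal combination, so that only $L^2$ bounds on the $\sigma_k$ are needed, together with the additional property $\int_0^T\sigma_k\,dt=0$, which guarantees $f(T)=0$ (essential for the control to actually steer the state after the change of variables $\psi=u-x^{\alpha}p(x)f(t)/p(0)$). Finally, for the lower bound in (ii) the key ingredient is the quantitative \emph{lower} bound $\|\sigma_k\|_{L^2(0,T)}\ge C\,((k+1)!\,\pi^{2k}T^k(T+1))^{-1}e^{-\lambda_kT}e^{C/T}$ valid for any biorthogonal family, applied with $u_0=\Phi_k$ and then simply $k=1$; the factor $1/(T(T+1))$ comes from this estimate, not from optimizing a mode $n(T)$, and without such a lower bound your optimization scheme has nothing to run on.
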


\subsection{The regularity of the targets}

The third result of our paper is related to the regularity of the targets $u_T$ that can be reached employing a control $f\in H^1(0,T)$. We already mentioned that Fattorini and Russel in \cite{fattorini1971exact} observed that, for the classical heat equation without potential, a reachable target is the restriction to the interval $[0,1]$ of an analytic function. A similar regularity result can be obtained also in our case. In particular we will prove the following.
\begin{proposition}\label{target_reg_prop}
Consider a sequence $(\,\rho_k^T)_{k\geq 1}$ such that, for some constant $P>0$, the sequence $(\,\rho_k^Te^{Pk})_{k\geq 1}$ remains bounded. Moreover, for all $x\in[0,1]$ define 
\begin{align*}
	u_T(x)=\sum_{k\geq 1}\rho_k^T\Phi_k(x).
\end{align*}
Then, there exists a function $\tilde{F}$, holomorphic in the strip $\{z\in\CC,\,|\Im z|<P/\pi\}$, such that 
\begin{align*}
	\forall\,x\in[0,1],\;\; u_T(x)=x^{\frac{1}{2}+\frac{1}{2}\sqrt{1-4\mu}}F(x).
\end{align*}
\end{proposition}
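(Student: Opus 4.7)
The plan is to exploit the explicit Bessel-function structure of the eigenfunctions $\Phi_{k}$ of $A_{\mu}$, which (as presumably established in Section~\ref{spectrum_sec}) take the form
\begin{align*}
\Phi_{k}(x)=\cnk\sqrt{x}\,J_{\nu}(\jnk x),\qquad \nu:=\tfrac{1}{2}\sqrt{1-4\mu},
\end{align*}
where $\jnk$ denotes the $k$-th positive zero of $J_{\nu}$ and $\cnk=\sqrt{2}/|J_{\nu+1}(\jnk)|$ is the $L^{2}$-normalization constant. The exponent in the target factorization is exactly $\tfrac12+\nu$, which is suggestive of the right first move.

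First, I would use the entire function
\begin{align*}
\tilde h(w):=\frac{J_{\nu}(w)}{w^{\nu}}=\sum_{m\geq 0}\frac{(-1)^{m}}{m!\,\Gamma(m+\nu+1)\,2^{2m+\nu}}\,w^{2m},
\end{align*}
to write $\Phi_{k}(x)=\cnk\,\jnk^{\nu}\,x^{1/2+\nu}\,\tilde h(\jnk x)$, and then define
\begin{align*}
F(z):=\sum_{k\geq 1}\rho_{k}^{T}\,\cnk\,\jnk^{\nu}\,\tilde h(\jnk z),
\end{align*}
so that formally $u_{T}(x)=x^{1/2+\nu}F(x)$ on $[0,1]$. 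The whole problem reduces to showing that the series defining $F$ converges uniformly on compact subsets of $\{z\in\CC:|\Im z|<P/\pi\}$, as each summand is entire in $z$ and the limit of a locally uniformly convergent sequence of holomorphic functions is holomorphic.

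To get uniform convergence, I would combine three classical pieces of information about Bessel functions: (i) the asymptotic $\jnk=k\pi+(\nu/2-1/4)\pi+O(1/k)$ for the zeros, which gives $\jnk\le k\pi+c$ for some constant $c$; (ii) the asymptotic $|J_{\nu+1}(\jnk)|\sim\sqrt{2/(\pi\jnk)}$, which yields $\cnk\le C_{1}\sqrt{\jnk}$; and (iii) the uniform bound $|J_{\nu}(w)|\le C_{2}\,|w|^{-1/2}e^{|\Im w|}$ valid for $|w|$ bounded away from $0$, which gives
\begin{align*}
|\tilde h(\jnk z)|\le C_{3}\,(\jnk|z|)^{-1/2-\nu}\,e^{\jnk|\Im z|}
\end{align*}
for $k$ large. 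Collecting these, and using the hypothesis $|\rho_{k}^{T}|\le Me^{-Pk}$, one obtains
\begin{align*}
|\rho_{k}^{T}\,\cnk\,\jnk^{\nu}\,\tilde h(\jnk z)|\le C_{4}\,|z|^{-1/2-\nu}\,e^{-Pk+\jnk|\Im z|}.
\end{align*}
For $z$ confined to a sub-strip $|\Im z|\le s<P/\pi$, the bound $\jnk\le k\pi+c$ yields $\jnk|\Im z|\le k\pi s+cs$, so the exponent is at most $-k(P-\pi s)+cs$; since $\pi s<P$, this decays exponentially in $k$, uniformly on compact subsets of the sub-strip that stay away from $z=0$. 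Continuity of $F$ at $z=0$ is then recovered either from the power-series representation of $\tilde h$ (which gives a uniform small-$z$ bound) or by noting that the convergence issue at the origin is harmless because the factor $x^{1/2+\nu}$ absorbs it in the factorization $u_{T}(x)=x^{1/2+\nu}F(x)$.

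The main obstacle I expect is bookkeeping the Bessel asymptotics carefully enough to match the strip width $P/\pi$ stated in the proposition: the key identity driving this is $\jnk\sim k\pi$, so that the exponential weight $e^{-Pk}$ in the hypothesis exactly counterbalances $e^{\jnk|\Im z|}$ when $|\Im z|$ approaches $P/\pi$. Once these asymptotics are invoked cleanly, the rest is a standard Weierstrass-type argument for holomorphy of a sum of entire functions, and the evaluation on $[0,1]$ recovers the Fourier expansion of $u_{T}$, completing the proof.
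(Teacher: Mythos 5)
Your argument is correct in substance, but it follows a genuinely different route from the paper for the key analytic step. The paper first rearranges the double series into a power series in $x$ and proves holomorphy of $F$ on the \emph{disc} $\{|z|<P/\pi\}$ by purely coefficient-wise estimates (the bound $|\cnk\jnk^{2\ell+\nu}|\leq C(\pi k)^{2\ell+1}$, the summation estimate $\sum_k k^{2\ell+1}e^{-Pk}\leq C(2\ell+1)!/P^{2\ell+2}$, and a radius-of-convergence computation), and then obtains the \emph{strip} statement for $\tilde F(z)=\sum_k\rho_k^T\cnk\jnk^{\nu}L_{\nu}(\jnk z)$ by citing Lemma 8.3 of Cannarsa--Martinez--Vancostenoble, identifying $F$ with $\tilde F$ by analytic continuation. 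You instead work with exactly this $\tilde F$ (your $\tilde h$ is the paper's $L_\nu$) and prove locally uniform convergence on the strip directly, via the complex-domain bound $|J_\nu(w)|\lesssim|w|^{-1/2}e^{|\Im w|}$, the normalization asymptotic $\cnk\sim(\pi\jnk)^{1/2}$, and $\jnk\leq\pi k+c$, so that the powers of $\jnk$ cancel and $e^{-Pk}$ beats $e^{\jnk|\Im z|}$ on every sub-strip $|\Im z|\leq s<P/\pi$. This buys self-containedness: you are in effect reproving the cited external lemma rather than invoking it, and you dispense with the disc step entirely. The price is that your estimate degenerates both as $z\to0$ (the factor $|z|^{-1/2-\nu}$) and, implicitly, near the negative real axis (where $J_\nu$ has a branch cut for non-integer $\nu$; this is harmless because $\tilde h$ is even and entire, but it deserves a sentence). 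For the origin, your first suggested fix is the right one and should be made explicit: since $\Gamma(m+\nu+1)\geq m!$ for $\nu\geq0$, the power series gives the global bound $|\tilde h(w)|\leq Ce^{|w|}$, so on $|z|\leq\epsilon<P/\pi$ the summands are dominated by $Ce^{-k(P-\pi\epsilon)+c\epsilon}$ and the convergence is uniform there too. Your second suggested fix (that $x^{1/2+\nu}$ ``absorbs'' the problem) does not work, since the proposition asserts holomorphy of $\tilde F$ itself in a full neighbourhood of $z=0$, not merely of the product on $[0,1]$.
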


\begin{remark}
We point out that, if the sequence of Fourier coefficients $(\,\rho_k^T)_{k\geq 1}$ is such that \eqref{fourier_cond_target} holds, this automatically implies that $(\,\rho_k^Te^{Pk})_{k\geq 1}$ remains bounded. Indeed, if the series \eqref{fourier_cond_target} is convergent, then we must have 
\begin{align}\label{sequence_lim}
	\lim_{k\to +\infty} |\,\rho_k^T|k^{\frac{1}{2}-\frac{1}{2}\sqrt{1-4\mu}}e^{Pk\pi} = 0.
\end{align}
	
\noindent Now, it is straightforward to check that 
\begin{align*}
	\lim_{k\to+\infty} k^{\frac{1}{2}-\frac{1}{2}\sqrt{1-4\mu}}= \left\{\begin{array}{ll}
	+\infty,  & \;\;\;\textrm{ for } \mu\in(0,1/4),
	\\
	1, & \;\;\;\textrm{ for } \mu=0,
	\\
	0, & \;\;\;\textrm{ for } \mu<0. 
	\end{array}\right.
\end{align*}
	
In any case, the condition \eqref{sequence_lim} can be satisfied only if the sequence $(\,\rho_k^Te^{Pk})_{k\geq 1}$ remains bounded.
\end{remark}

\subsection{Identification of the targets reachable for all $\mu<1/4$}

We will conclude this work showing that the only target which is reachable for all values of the coefficient $\mu<1/4$ is $u_T=0$. This fact will be a direct consequence of Proposition \ref{target_reg_prop}. In particular, we will prove the following.

\begin{proposition}\label{reachable_prop}
Given $\mu<1/4$, let $\mathcal{P}_{\mu,T}$ be the subset of the reachable targets $u_T$ that satisfy the condition \eqref{fourier_cond_target}. Then, we have
\begin{align*}
	\bigcap_{\mu<1/4} \mathcal{P}_{\mu,T} = \{0\}.
\end{align*}
\end{proposition}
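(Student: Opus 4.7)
The plan is to argue by contradiction, exploiting the factorization supplied by Proposition \ref{target_reg_prop}: a nontrivial $u_T$ belonging to every $\mathcal{P}_{\mu,T}$ would have to factorise through a $\mu$-dependent power of $x$, and this turns out to be impossible because the exponent varies continuously with $\mu$ while the complementary exponent is forced to lie in $\NN$.

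Suppose $u_T\in\bigcap_{\mu<1/4}\mathcal{P}_{\mu,T}$ with $u_T\not\equiv 0$. By the remark following Proposition \ref{target_reg_prop}, condition \eqref{fourier_cond_target} forces the sequence $(\,\rho_k^Te^{Pk})_{k\geq 1}$ to remain bounded, so Proposition \ref{target_reg_prop} applies and yields, for each $\mu<1/4$, a function $F_\mu$ holomorphic on the strip $\{z\in\CC,\,|\Im z|<P/\pi\}$ with
\[
u_T(x)=x^{\frac{1}{2}+\frac{1}{2}\sqrt{1-4\mu}}F_\mu(x),\qquad x\in[0,1].
\]
Since $u_T\not\equiv 0$, $F_\mu\not\equiv 0$, so $F_\mu$ admits a finite order of vanishing $n_\mu\in\NN$ at $x=0$; writing $F_\mu(x)=x^{n_\mu}G_\mu(x)$ with $G_\mu$ holomorphic and $G_\mu(0)\neq 0$, one obtains
\[
u_T(x)=x^{\gamma(\mu)}G_\mu(x),\qquad \gamma(\mu):=\frac{1}{2}+\frac{1}{2}\sqrt{1-4\mu}+n_\mu.
\]

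The key step is to show that $\gamma(\mu)$ is in fact independent of $\mu$. Comparing the representations associated with two parameters $\mu,\mu'<1/4$ gives $x^{\gamma(\mu)-\gamma(\mu')}=G_{\mu'}(x)/G_\mu(x)$ on $(0,1]$; the right-hand side tends to the finite nonzero limit $G_{\mu'}(0)/G_\mu(0)$ as $x\to 0^+$, so the same must hold for the left-hand side, which forces $\gamma(\mu)=\gamma(\mu')$. Denoting the common value by $\gamma$, we conclude that
\[
n_\mu=\gamma-\frac{1}{2}-\frac{1}{2}\sqrt{1-4\mu}\ \in\ \NN\qquad\text{for every }\mu<1/4.
\]

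To finish, observe that $\mu\mapsto\frac{1}{2}+\frac{1}{2}\sqrt{1-4\mu}$ is continuous and strictly decreasing on $(-\infty,1/4)$, mapping this interval onto $(1/2,+\infty)$. Hence $\mu\mapsto n_\mu$ is continuous, strictly increasing and non-constant, which is manifestly incompatible with taking only non-negative integer values on an uncountable interval. The contradiction proves $u_T\equiv 0$. The only delicate point is the constancy of $\gamma(\mu)$; it rests on the fact that the asymptotic behaviour of $u_T$ at $0$ pins down a unique exponent, and this in turn hinges on the holomorphy (not merely smoothness) of the factors $F_\mu$ near $x=0$ granted by Proposition \ref{target_reg_prop}.
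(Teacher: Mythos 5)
Your argument is correct and follows essentially the same route as the paper's own proof: both extract from Proposition \ref{target_reg_prop} the local behaviour $u_T\sim c\,x^{\kappa_\mu+\frac{1}{2}+\frac{1}{2}\sqrt{1-4\mu}}$ at $x=0$, deduce that the total exponent must be independent of $\mu$, and then derive a contradiction because the integer part would have to vary continuously and non-constantly with $\mu$. If anything, your justification of the constancy of the exponent (via the limit of $G_{\mu'}/G_\mu$ at $0$) is stated more carefully than in the paper.
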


\section{Well-posedness}\label{well_posedness_sec}
We analyze here existence and uniqueness of solutions of the heat equation \eqref{heat_hardy_nhb}. For doing that, we will follow the approach introduced in \cite{cannarsa2015cost,cannarsa2018cost}, that consists in transforming our original problem in one with Dirichlet homogeneous boundary conditions and a source term (depending on the control function $f$). To this end, let us introduce the change of variables 
\begin{align}\label{cv}
	\psi(x,t):=u(x,t)-x^{\alpha}\frac{p(x)}{p(0)}f(t),\;\;\; p(x):=1-x^{1-2\alpha}.
\end{align}

We observe that, by \eqref{alpha}, we have $1-2\alpha=\sqrt{1-4\mu}$. This means that, in what follows, we shall assume $\mu<1/4$ since, when $\mu=1/4$ the change of variables \eqref{cv} is the trivial one. Notice, however, that this assumption is not a limitation. Indeed, for critical potentials we do not expect our equation \eqref{heat_hardy_nhb} to be well posed, at least not with the boundary conditions that we are imposing. A more detailed discussion on this point will be presented in the Appendix \ref{appendix} at the end of the present work.

Now, if $u$ is a solution of \eqref{heat_hardy_nhb} then, formally, the new function $\psi$ defined in \eqref{cv} satisfies the problem
\begin{align}\label{heat_hardy_psi}
	\begin{cases}
		\displaystyle \psi_t-\psi_{xx}-\frac{\mu}{x^2}\psi=-x^{\alpha}\frac{p(x)}{p(0)}f'(t), & (x,t)\in Q
		\\[6pt] 
		\psi(0,t)=\psi(1,t)=0, & t\in (0,T)
		\\[6pt]
		\psi(x,0)=u_0(x)-x^{\alpha}\frac{p(x)}{p(0)}f(0), & x\in (0,1)
	\end{cases}
\end{align} 

Therefore, for obtaining the well-posedness of the boundary value problem \eqref{heat_hardy_nhb}, we firstly need to discuss the existence and uniqueness of solutions for heat equations of the type
\begin{align}\label{1d_heat_hardy_nh}
	\begin{cases}
		\displaystyle w_t-w_{xx}-\frac{\mu}{x^2}w=h(x,t), & (x,t)\in Q
		\\[6pt] 
		w(0,t)=w(1,t)=0, & t\in(0,T)
		\\[6pt]
		w(x,0)=w_0(x), & x\in(0,1).
	\end{cases}
\end{align}

Existence and uniqueness of solutions for problems of the type of \eqref{1d_heat_hardy_nh} is by now classical (see, for instance, \cite{vazquez2000hardy}). For the sake of completeness, in what follows we present a brief discussion on this point. 

Let us introduce the Hilbert space $H$ defined as the closure of $C_0^{\infty}(0,1)$ with respect to the norm 
\begin{align*}
	\forall w\in H_0^1(0,1),\;\;\norm{w}{H} = \left[\int_0^1 \left(w_x^2 - \frac{\mu}{x^2}w^2\right)\,dx\right]^{\frac{1}{2}}.
\end{align*}

We notice that, in view of the Hardy inequality \eqref{hardy}, this space $H$ can be defined for all $\mu\leq 1/4$. Moreover, it is simply a matter of computations to show that there exist two positive constants $M_1$ and $M_2$, depending on $\mu$, such that it holds the following inequality
\begin{align}\label{1d_norm_equiv}
	\left(1-4\mu\right)\,\int_0^1 w_x^2 + M_1\int_0^1 w^2\,dx \leq\norm{w}{H}^2\leq \left(1+4\mu\right)\,\int_0^1 w_x^2 + M_2\int_0^1 w^2\,dx.
\end{align}

It is evident that, in the sub-critical case $\mu<1/4$, from \eqref{1d_norm_equiv} it follows the identification $H=H_0^1(0,1)$ with equivalent norms. On the other hand, in the critical case $\mu=\mu^*$ this identification does not hold anymore and the space $H$ is strictly larger than $H_0^1(0,1)$. For more details on this point, we refer to \cite{vazquez2000hardy}. Notice, however, that in this work we are not interested in the case $\mu=\mu^*$.

Let us now consider the unbounded operator $\mathcal{A}:\mathcal{D}(\mathcal{A})\subset L^2(0,1)\to L^2(0,1)$, defined for all $\mu< 1/4$ as
\begin{align}\label{1d_operator_A}
	\begin{array}{c}
		\displaystyle\mathcal{D}(\mathcal{A}):=\left\{w\in H\,\Big|\, w_{xx}+\frac{\mu}{x^2}w\in L^2(0,1)\right\}, 
		\\
		\\
		\displaystyle\mathcal{A}w:=-w_{xx}-\frac{\mu}{x^2}w,
	\end{array}
\end{align} 
and whose norm is given by
\begin{align*}
	\norm{w}{\mathcal{A}} = \norm{w}{L^2(0,1)} + \norm{\mathcal{A}w}{L^2(0,1)}.
\end{align*}

With the definitions that we just gave we have that, for any $\mu< 1/4$, the operator \eqref{1d_operator_A} generates an analytic semi-group $e^{t\mathcal{A}}$ on the pivot space $L^2(0,1)$ for the equation \eqref{1d_heat_hardy_nh}. Therefore, given a source term $h\in L^2((0,1)\times (0,T))$, equation \eqref{1d_heat_hardy_nh} is well posed. 

Employing the variation of constant formula, and referring to the discussion presented in \cite{cannarsa2015cost}, we define the \textit{mild solution} $w\in C([0,T];L^2(0,1))\cap L^2(0,T;H_0^1(0,1))$ of \eqref{1d_heat_hardy_nh} as
\begin{align*}
	w(x,t)=e^{t\mathcal{A}}w_0+\int_0^t e^{(t-s)\mathcal{A}}h(x,s)\,ds.
\end{align*}
Moreover, we say that a function
\begin{align*}
	w\in C([0,T];H_0^1(0,1))\cap H^1(0,T;L^2(0,1))\cap L^2(0,T;\mathcal{D}(\mathcal{A}))
\end{align*}
is a \textit{strict solution} of \eqref{1d_heat_hardy_nh} if it satisfies the equation a.e. in $Q$ and the initial and boundary conditions for all $t\in[0,T]$ and $x\in[0,1]$. Referring to \cite{cannarsa2015cost}, we have the following.
\begin{proposition}
	If $w_0\in H_0^1(0,1)$, then the mild solution of \eqref{1d_heat_hardy_nh} is also the unique strict solution.
\end{proposition}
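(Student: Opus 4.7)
The plan is to exploit the self-adjointness of $\mathcal{A}$ via its spectral decomposition. First I would record the key identification: by the norm equivalence \eqref{1d_norm_equiv}, which is exactly where the subcriticality assumption $\mu<1/4$ enters, the form domain satisfies $\mathcal{D}(\mathcal{A}^{1/2})=H=H_0^1(0,1)$ with equivalent norms. Since $\mathcal{A}$ is self-adjoint and positive, it admits an orthonormal basis of eigenfunctions $(\Phi_k)_{k\geq 1}$ with eigenvalues $0<\lambda_1\leq\lambda_2\leq\ldots$, so the hypothesis $w_0\in H_0^1(0,1)$ is equivalent to $\sum_k\lambda_k|c_k|^2<\infty$ with $c_k=(w_0,\Phi_k)_{L^2(0,1)}$.

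For the homogeneous contribution $e^{t\mathcal{A}}w_0=\sum_k c_k e^{-\lambda_k t}\Phi_k$, the desired regularity follows from direct spectral estimates. The $H$-norm at time $t$ equals $\sum_k\lambda_k|c_k|^2 e^{-2\lambda_k t}$, which is bounded by $\sum_k\lambda_k|c_k|^2$ and continuous in $t\in[0,T]$ by dominated convergence, yielding $e^{t\mathcal{A}}w_0\in C([0,T];H_0^1(0,1))$; the estimate $\int_0^T\sum_k\lambda_k^2|c_k|^2 e^{-2\lambda_k t}\,dt\leq \tfrac12\sum_k\lambda_k|c_k|^2$ gives membership in $L^2(0,T;\mathcal{D}(\mathcal{A}))$; and differentiating term by term produces the corresponding bound in $H^1(0,T;L^2(0,1))$.

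For the Duhamel integral $v(t)=\int_0^t e^{(t-s)\mathcal{A}}h(\cdot,s)\,ds$ with $h\in L^2(Q)$, I would invoke maximal $L^2$-regularity for analytic semigroups generated by positive self-adjoint operators on a Hilbert space (de Simon's theorem), which yields $v\in L^2(0,T;\mathcal{D}(\mathcal{A}))\cap H^1(0,T;L^2(0,1))$ with $v(0)=0$. Continuity $v\in C([0,T];H_0^1(0,1))$ then follows from the Lions--Magenes embedding $L^2(0,T;\mathcal{D}(\mathcal{A}))\cap H^1(0,T;L^2(0,1))\hookrightarrow C([0,T];\mathcal{D}(\mathcal{A}^{1/2}))$. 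Summing the two contributions, differentiating the Duhamel formula, and checking the initial and boundary conditions confirm that $w$ is a strict solution.

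Uniqueness reduces to an energy estimate: the difference $v$ of two strict solutions satisfies the homogeneous equation with $v(0)=0$, and pairing against $v$ in $L^2(0,1)$ produces $\tfrac12\tfrac{d}{dt}\|v\|_{L^2(0,1)}^2+\|v\|_H^2=0$, forcing $v\equiv 0$ since $\|\cdot\|_H$ is a genuine norm for $\mu<1/4$. The only delicate point I anticipate is justifying the integration by parts in the presence of the singular potential; this is legitimate because $v(t)\in\mathcal{D}(\mathcal{A})\subset H$ for a.e.\ $t$, and Hardy's inequality \eqref{hardy} ensures $v/x\in L^2(0,1)$, so the boundary contributions at $x=0$ vanish.
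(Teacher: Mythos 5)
Your argument is correct, but it is worth noting that the paper itself offers no proof of this proposition: it simply cites \cite{cannarsa2015cost}, where the analogous statement for the degenerate equation is established by semigroup methods. Your self-contained route --- spectral decomposition of the positive self-adjoint operator $\mathcal{A}$ for the free evolution, de Simon's maximal $L^2$-regularity plus the Lions--Magenes interpolation embedding for the Duhamel term, and a standard energy estimate for uniqueness --- is a legitimate and arguably more transparent alternative, and it correctly isolates where $\mu<1/4$ enters (the norm equivalence \eqref{1d_norm_equiv} giving $\mathcal{D}(\mathcal{A}^{1/2})=H=H_0^1(0,1)$, hence compact resolvent and a genuine norm $\norm{\cdot}{H}$ for the coercivity step). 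The one point that deserves slightly more explicit care than you give it is the identification of the paper's domain $\mathcal{D}(\mathcal{A})=\{w\in H \mid w_{xx}+\mu x^{-2}w\in L^2(0,1)\}$ with the Friedrichs-extension domain, which is what licenses both the eigenfunction expansion of elements of $\mathcal{D}(\mathcal{A})$ and the identity $(\mathcal{A}v,v)_{L^2(0,1)}=\norm{v}{H}^2$ used in your uniqueness step; you flag the vanishing of the boundary contribution at $x=0$ via Hardy's inequality, which is indeed the crux, so this is a matter of presentation rather than a gap. What your approach buys is independence from the change-of-variables machinery relating \eqref{1d_heat_hardy_nh} to degenerate equations, which, as the Appendix explains, would otherwise restrict the admissible range of $\mu$; what the paper's citation buys is brevity and uniformity with the estimates on biorthogonal families it imports from the same reference.
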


\noindent Of course, the above discussion for \eqref{1d_heat_hardy_nh} apply, in particular, to \eqref{heat_hardy_psi} taking
\begin{align*}
	h(x,t)=-x^{\alpha}\frac{p(x)}{p(0)}f'(t)\;\;\textrm{ and }\;\; w_0(x)=u_0(x)-x^{\alpha}\frac{p(x)}{p(0)}f(0).
\end{align*}

Moreover, we notice that, by definition, $h\in L^2((0,1)\times (0,T))$ and $w_0\in L^2(0,1)$. This allows us to define in a suitable way the solution of our original problem \eqref{heat_hardy_nhb}. 
\begin{definition}
	$\newline$
	\begin{itemize}
		\item[(i)] We say that $u\in C([0,T];L^2(0,1))\cap L^2(0,T;H^1(0,1))$ is the mild solution of \eqref{heat_hardy_nhb} if $\psi$ defined as in \eqref{cv} is the mild solution of \eqref{heat_hardy_psi}.
		\item[(ii)] We say that $u\in C([0,T];H^1(0,1))\cap H^1(0,T;L^2(0,1))\cap L^2(0,T;\mathcal{D}(\mathcal{A}))$ is the strict solution of \eqref{heat_hardy_nhb} if $\psi$ defined as in \eqref{cv} is the strict solution of \eqref{heat_hardy_psi}.
	\end{itemize}
\end{definition}
\noindent Then, we immediately obtain
\begin{theorem}\label{1d_well-posedness_thm}
	Let us consider $f\in H^1(0,T)$ and $\mu< 1/4$. Then, the following assertions hold:
	\begin{itemize}
		\item[(i)] For all $u_0\in L^2(0,1)$, the non-homogeneous boundary problem \eqref{heat_hardy_nhb} admits a unique mild solution 
		\begin{align*}
			u\in C([0,T];L^2(0,1))\cap L^2(0,T;H^1(0,1)).
		\end{align*}
		\item[(ii)] For all $u_0\in H^1(0,1)$, the non-homogeneous boundary problem \eqref{heat_hardy_nhb} admits a unique strict solution 
		\begin{align*}
			u\in C([0,T];H^1(0,1))\cap H^1(0,T;L^2(0,1))\cap L^2(0,T;\mathcal{D}(\mathcal{A})).
		\end{align*}
	\end{itemize}
\end{theorem}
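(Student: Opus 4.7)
My plan is to reduce problem \eqref{heat_hardy_nhb} to the homogeneous-Dirichlet source problem \eqref{1d_heat_hardy_nh} via the change of variables \eqref{cv}, and then to invoke the mild/strict existence result stated in the preceding proposition.

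The first, algebraic, step sets $\xi(x):=x^{\alpha}p(x)/p(0)$. The hypothesis $\mu<1/4$ ensures $\alpha<1/2$ and $1-2\alpha=\sqrt{1-4\mu}>0$, so $p$ is continuous on $[0,1]$ with $p(0)=1$ and $p(1)=0$, and a direct calculation gives $\xi(x)=x^{\alpha}-x^{1-\alpha}$. By the explicit form \eqref{1d_sol_elliptic} of the kernel of $A_{\mu}$, both $x^{\alpha}$ and $x^{1-\alpha}$ are annihilated by $A_{\mu}$; hence $A_{\mu}\xi\equiv 0$ on $(0,1)$. Substituting $u=\psi+\xi f$ into \eqref{heat_hardy_nhb} then produces exactly \eqref{heat_hardy_psi} with source $h(x,t)=-\xi(x)f'(t)$ and initial datum $w_0(x)=u_0(x)-\xi(x)f(0)$; the identities $\xi(1)=0$ and $x^{-\alpha}\xi(x)\big|_{x=0}=1$ convert the non-homogeneous data for $u$ into homogeneous Dirichlet conditions for $\psi$.

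The second step is to verify the regularity hypotheses required by the preceding proposition. Since $\alpha<1/2$, one checks $\xi\in L^{2}(0,1)$, and combined with $f'\in L^{2}(0,T)$ (from $f\in H^{1}(0,T)$) this gives $h\in L^{2}(Q)$. For (i), $u_0,\xi\in L^{2}(0,1)$ give $w_0\in L^{2}(0,1)$, and the proposition supplies the unique mild solution $\psi\in C([0,T];L^{2}(0,1))\cap L^{2}(0,T;H_{0}^{1}(0,1))$. For (ii), using $A_{\mu}\xi=0$ together with the norm equivalence \eqref{1d_norm_equiv} one obtains $\xi\in H=H_{0}^{1}(0,1)$, so $w_0\in H$ under the natural boundary compatibility between $u_0$ and $f(0)$, and the proposition yields the unique strict solution $\psi$. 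Transferring regularity back through $u=\psi+\xi f$ and using $f\in H^{1}(0,T)\hookrightarrow C([0,T])$ together with the smoothness of $\xi$ on $(0,1]$, the lifting $\xi f$ lies in the function spaces appearing in each case, so $u$ inherits the claimed regularity; uniqueness for $u$ follows from uniqueness for $\psi$ and the bijectivity of \eqref{cv}.

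The main obstacle I anticipate is part (ii): one must verify carefully that $\xi\in H$ with the correct trace at the singular endpoint $x=0$, so that $w_0$ really lies in the domain required by the strict-solution theory. The mild-solution case (i) is pure $L^{2}$ book-keeping and involves no compatibility condition, which is why the author writes ``we immediately obtain'' the theorem from the proposition.
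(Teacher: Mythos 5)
Your overall strategy is exactly the paper's: lift the boundary datum with $\xi(x)=x^{\alpha}p(x)/p(0)=x^{\alpha}-x^{1-\alpha}$, pass to the homogeneous problem \eqref{heat_hardy_psi}, and invoke the mild/strict existence results for \eqref{1d_heat_hardy_nh}. Your algebraic verifications ($A_{\mu}\xi\equiv 0$ via \eqref{1d_sol_elliptic}, $\xi(1)=0$, $x^{-\alpha}\xi\to 1$ as $x\to 0^{+}$) are correct and usefully make explicit what the paper only asserts.

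However, the two regularity claims you make about $\xi$ --- and these are precisely the points on which the whole reduction hinges --- are not right. First, the condition for $\xi\in L^{2}(0,1)$ is not $\alpha<1/2$ (that says nothing about $x=0$, which is where $\xi\sim x^{\alpha}$ blows up when $\mu<0$); you need $2\alpha>-1$, i.e.\ $\mu>-3/4$. For $\mu\le -3/4$ the source $h=-\xi f'$ and the shifted datum $w_{0}=u_{0}-\xi f(0)$ fail to be in $L^{2}$ unless $f'\equiv 0$ and $f(0)=0$, so the argument as written does not cover the full range $\mu<1/4$ claimed in the statement. (The paper asserts $h\in L^{2}(Q)$ ``by definition'' and is silent on this point, so this gap is shared with the source.) Second, and more seriously for part (ii), the claim that $A_{\mu}\xi=0$ together with \eqref{1d_norm_equiv} gives $\xi\in H=H_{0}^{1}(0,1)$ is false: $\xi'(x)=\alpha x^{\alpha-1}-(1-\alpha)x^{-\alpha}$, the two exponents never coincide for $\alpha\neq 1/2$, and $\int_{0}^{1}x^{2\alpha-2}\,dx=+\infty$ for every $\alpha<1/2$ with $\alpha\neq 0$; for $\alpha=0$ one has $\xi=1-x$, which does not vanish at $x=0$. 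Lying in the kernel of the differential expression $A_{\mu}$ does not place $\xi$ in the energy space --- $\xi$ is precisely the non-variational solution of $A_{\mu}\xi=0$ that makes the lifting of the weighted boundary condition possible. Consequently $w_{0}=u_{0}-\xi f(0)\notin H_{0}^{1}(0,1)$ whenever $f(0)\neq 0$, the strict-solution proposition cannot be applied, and your ``natural boundary compatibility'' degenerates to the requirement $f(0)=0$, which is not assumed in the statement. You would need either to restrict part (ii) to controls with $f(0)=0$ or to give a genuinely different argument for the strict-solution case.
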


\section{Eigenvalues and eigenfunctions}\label{spectrum_sec}

This Section is devoted to the analysis of the spectrum of the operator $A_{\mu}$. In particular, we will compute explicitly the eigenvalues and eigenfunctions of $A_{\mu}$ associated to Dirichlet homogeneous boundary conditions, and we will present some fundamental properties and useful estimates. This knowledge of the spectrum will be fundamental for applying the moment method. 

\noindent Let us consider the following eigenvalues problem:
\begin{align*}
	\begin{cases}
		\displaystyle -\phi_k''(x)-\frac{\mu}{x^2}\phi_k(x)=\lambda_k\phi_k(x), &x\in(0,1)
		\\[6pt]
		\phi_k(0)=\phi_k(1)=0.
	\end{cases}
\end{align*}
We know that the general solution of the second order ODE 
\begin{align*}
	-\phi_k''(x)-\frac{\mu}{x^2}\phi_k(x)=\lambda_k\phi_k(x)
\end{align*} 
takes the form
\begin{align*}
	\phi_k(x)=c_1\,x^{\frac{1}{2}}\,J_{\nu}\left(\lambda_k^{\frac{1}{2}}\,x\right)
+c_2\,x^{\frac{1}{2}}\,Y_{\nu}\left(\lambda_k^{\frac{1}{2}}\,x\right),
\end{align*}
with $(c_1,c_2)\neq (0,0)$ and
\begin{align}\label{nu}
	\nu:=\frac{1}{2}\sqrt{1-4\mu},
\end{align}
where $J_{\nu}$ and $Y_{\nu}$ are the Bessel functions of order $\nu$, of first and second kind respectively. 

Since we know (see \cite[Sections 5.3 and 5.4]{lebedev1972special}) that $J_{\nu}(0)=0$ and $Y_{\nu}(0)=-\infty$, the boundary condition $\phi_k(0)=0$ is satisfied choosing $c_2=0$ and $c_1\neq 0$. Without losing generality, we will assume $c_1=1$. Concerning the condition at $x=1$, instead, we have
\begin{align*}
	\phi_k(1)=J_{\nu}\left(\lambda_k^{\frac{1}{2}}\right)=0,
\end{align*}
that holds if we take $\lambda_k:=\jnk^2$, where $\jnk$ are the zeros of $J_{\nu}$. Summarizing, we obtained 
\begin{align*}
	\phi_k(x)=x^{\frac{1}{2}}\,J_{\nu}\left(\jnk\,x\right).
\end{align*}
We remind here that the function $J_{\nu}$ is defined as (see, e.g., \cite[Section 5.3]{lebedev1972special})
\begin{align}\label{bessel_def}
	J_{\nu}(x):=\sum_{m\geq 0}\frac{(-1)^m}{m!\Gamma(m+\nu+1)}\left(\frac{x}{2}\right)^{2m+\nu},
\end{align}
where $\Gamma$ is the Euler Gamma function. Moreover, for $\nu\geq -1$ we know that $J_{\nu}$ has an infinite number of real zeros, all of which are simple with the possible exception of $x=0$, that form a strictly increasing sequence 
\begin{align*}
	0<j_{\nu,1}<j_{\nu,2}<\ldots<\jnk\to +\infty, \;\;\textrm{ as }\; k\to +\infty.
\end{align*}

Furthermore, for any $\nu\geq -1/2$ the Bessel functions $J_{\nu}$ enjoy the following orthogonality property in $[0,1]$ (see \cite[Section 5.14]{lebedev1972special}):
\begin{align}\label{bessel_orth}
	\int_0^1 x J_{\nu}(\jnk\,x)J_{\nu}(j_{\nu,\ell}\,x)\,dx=\frac{\delta_{k,\ell}}{2}\Big(J_{\nu+1}(\jnk)\Big)^2.
\end{align}

Here, $\delta_{k,\ell}$ denotes the Kronecker symbol. Besides, the Bessel functions of the first kind satisfy the recurrence formula (see \cite[Section 3.2]{watson1995treatise}):
\begin{align}\label{bessel_rec}
	J_{\nu+1}(x)= \frac{\nu}{x}J_{\nu}(x)-J_{\nu}'(x).
\end{align}
Using \eqref{bessel_orth} and \eqref{bessel_rec}, for all $k,\ell\geq 1$ we can compute
\begin{align*}
	(\phi_k,\phi_{\ell})_{L^2(0,1)} &= \int_0^1 \phi_k(x)\phi_{\ell}(x)\,dx = \int_0^1 x J_{\nu}(\jnk\,x)J_{\nu}(j_{\nu,\ell}\,x)\,dx
	\\
	&=\frac{\delta_{k,\ell}}{2}\Big(J_{\nu+1}(\jnk)\Big)^2=\frac{\delta_{k,\ell}}{2}\Big(\frac{\nu}{\jnk}J_{\nu}(\jnk)-J_{\nu}'(\jnk)\Big)^2=\frac{\delta_{k,\ell}}{2}\Big(J_{\nu}'(\jnk)\Big)^2,
\end{align*}
and we immediately have 
\begin{align*}
	\norm{\phi_k}{L^2(0,1)}=(\phi_k,\phi_k)_{L^2(0,1)}^{\frac{1}{2}}=\frac{|J_{\nu}'(\jnk)|}{\sqrt{2}}.
\end{align*}

Therefore, we can finally write the normalized eigenfunctions and the spectrum of the operator \eqref{singular_op} on $(0,1)$ with Dirichlet boundary conditions, namely
\begin{align*}
	\Phi_k(x)=\cnk\, x^{\frac{1}{2}}J_{\nu}(\jnk x), \;\;\; \lambda_k=\jnk^2,
\end{align*}
where we introduced the notation $\cnk:=\norm{\phi_k}{L^2(0,1)}^{-1}$. Moreover, it is classical that the family $(\Phi_k)_{k\geq 1}$ forms an orthonormal basis of $L^2(0,1)$.

\subsection{Some bounds for $J_{\nu}$ and its zeros}
Referring to \cite[Section 15.53]{watson1995treatise}, we can give the following asymptotic expansion of the zeros of the Bessel function $J_{\nu}$, for any fixed $\nu\geq 0$:
\begin{align*}
	\jnk=\left(k+\frac{\nu}{2}-\frac{1}{4}\right)\pi-\frac{4\nu^2-1}{8\left(k+\frac{\nu}{2}-\frac{1}{4}\right)\pi}+O\left(\frac{1}{k^3}\right),\;\;\;\textrm{ as } k\to +\infty.
\end{align*}

Moreover, in what follows we will also need the following bounds on the zeros $\jnk$, which are  provided in \cite[Lemma 1]{lorch2008monotonic}
\begin{align}\label{bessel_zero_bound}
	\forall \nu\in \left[0,\frac{1}{2}\right],\;\forall k\geq 1,\;\;\; \pi\left(k+\frac{\nu}{2}-\frac{1}{4}\right)\leq\jnk\leq\pi\left(k+\frac{\nu}{4}-\frac{1}{8}\right), \nonumber
	\\
	\forall \nu\in \left[\frac{1}{2},+\infty\right],\;\forall k\geq 1,\;\;\; \pi\left(k+\frac{\nu}{4}-\frac{1}{8}\right)\leq\jnk\leq\pi\left(k+\frac{\nu}{2}-\frac{1}{4}\right).
\end{align}

The inequalities above become exact when $\nu =1/2$ (corresponding, according to \eqref{nu}, to $\mu=0$). In particular, we have
\begin{align}\label{bessel_zero_upper_bound}
	\jnk\leq k\pi,&\;\;\;\textrm{ for }\; \nu\in\left[0,\frac{1}{2}\right], \nonumber
	\\
	\jnk\leq \left(k+\frac{\nu}{2}\right)\pi, &\;\;\;\textrm{ for }\; \nu\in\left[\frac{1}{2},+\infty\right],
\end{align}
and
\begin{align}\label{bessel_zero_lower_bound}
	\jnk\geq \left(k-\frac{1}{4}\right)\pi,&\;\;\;\textrm{ for }\; \nu\in\left[0,\frac{1}{2}\right], \nonumber
	\\
	\jnk\geq \left(k-\frac{1}{8}\right)\pi, &\;\;\;\textrm{ for }\; \nu\in\left[\frac{1}{2},+\infty\right].
\end{align}

We also recall the following result, whose proof is by now classical and can be found in \cite[Proposition 7.8]{komornik2005fourier}.
\begin{lemma}\label{eigen_gap_lemma}
Let $\jnk$, $k\geq 1$ be the positive zeros of the Bessel function $J_{\nu}$. Then, the following holds:
\begin{enumerate}
	\item The difference sequence $(j_{\nu,k+1}-\jnk)_k$ converges to $\pi$ as $k\to +\infty$.
	
	\item The sequence $(j_{\nu,k+1}-\jnk)_k$ is strictly decreasing if $|\nu| > 1/2$, strictly increasing if $|\nu| < 1/2$, and constant if $|\nu| = 1/2$.
\end{enumerate}
\end{lemma}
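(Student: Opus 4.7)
\medskip

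\noindent\textbf{Proof plan.}

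\emph{Part (1).} The asymptotic expansion \eqref{bessel_zero_exp} is already in hand, so I would just subtract:
\begin{align*}
j_{\nu,k+1}-j_{\nu,k} = \pi - \frac{4\nu^2-1}{8\pi}\left[\frac{1}{k+\tfrac{\nu}{2}+\tfrac{3}{4}}-\frac{1}{k+\tfrac{\nu}{2}-\tfrac{1}{4}}\right] + O\!\left(\frac{1}{k^3}\right) = \pi + O\!\left(\frac{1}{k^2}\right),
\end{align*}
which proves the limit. This part is essentially a one-line consequence of what has already been quoted.

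\emph{Part (2).} This is the substantive one and I would prove it via a Sturm comparison argument after a Liouville transformation. Setting $v(x):=\sqrt{x}\,J_{\nu}(x)$, a direct computation from the Bessel equation shows that $v$ satisfies
\begin{align*}
v''(x)+q_{\nu}(x)\,v(x)=0, \qquad q_{\nu}(x):=1+\frac{1-4\nu^2}{4x^2},
\end{align*}
and the positive zeros of $v$ coincide with those of $J_{\nu}$. Thus the sign of $1-4\nu^2$ dictates whether $q_{\nu}$ is strictly decreasing ($|\nu|<1/2$), strictly increasing ($|\nu|>1/2$), or identically $1$ ($|\nu|=1/2$). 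The case $|\nu|=1/2$ gives $v''+v=0$, hence the zeros are those of a trigonometric function with constant gap $\pi$, which handles the equality case.

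For the strict monotonicity, I would fix $k\geq 1$, set $\Delta_k:=j_{\nu,k+1}-j_{\nu,k}$, and compare $v$ with its translate $w(x):=v(x+\Delta_k)$, which solves $w''+q_{\nu}(x+\Delta_k)\,w=0$. By construction, both $v$ and $w$ vanish at $x=j_{\nu,k}$; the next zero of $v$ to the right is at $j_{\nu,k+1}$, while the next zero of $w$ is at $j_{\nu,k+2}-\Delta_k$. If $|\nu|>1/2$, then $q_{\nu}(x+\Delta_k)>q_{\nu}(x)$ on $(j_{\nu,k},j_{\nu,k+1})$, so Sturm's comparison theorem gives a zero of $w$ strictly before $j_{\nu,k+1}$, i.e.\ $j_{\nu,k+2}-\Delta_k<j_{\nu,k+1}$, which rearranges to $j_{\nu,k+2}-j_{\nu,k+1}<\Delta_k$; the gap sequence is strictly decreasing. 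The case $|\nu|<1/2$ is identical with the inequalities reversed, yielding strict increase.

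\emph{Main obstacle.} The analytic content is mild, but one has to be careful about the direction of Sturm's inequality (larger potential $\Rightarrow$ faster oscillation $\Rightarrow$ earlier zero) and to check that the translated equation is genuinely comparable on the entire interval $(j_{\nu,k},j_{\nu,k+1})$, not merely at the endpoints; since $q_\nu$ is monotone on all of $(0,\infty)$ this comparison holds uniformly on that interval, so no subtlety arises beyond a clean application of the comparison theorem.
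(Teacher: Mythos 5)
The paper does not actually prove this lemma; it only cites \cite[Proposition 7.8]{komornik2005fourier}, and your argument --- the McMahon expansion \eqref{bessel_zero_exp} for part (1), and for part (2) the Liouville transform $v(x)=\sqrt{x}\,J_{\nu}(x)$ solving $v''+\bigl(1+\tfrac{1-4\nu^2}{4x^2}\bigr)v=0$ followed by Sturm comparison of $v$ with its translate --- is precisely the classical proof given in that reference. The direction of the comparison (larger potential, earlier next zero) and the identification of $j_{\nu,k}$ and $j_{\nu,k+2}-\Delta_k$ as consecutive zeros of the translate both check out, so the proof is correct and essentially the same as the one the paper relies on.
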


In addition, we can easily show that the difference $j_{\nu,k+1}-\jnk$ between two successive eigenvalues is always strictly positive. For $\mu\in[0,1/2]$, this follows employing the estimates \eqref{bessel_zero_upper_bound} and \eqref{bessel_zero_lower_bound}:
\begin{align*}
	\sqrt{\lambda_{k+1}}-\sqrt{\lambda_k} = j_{\nu,k+1}-\jnk\geq \pi\left(k+1-\frac{1}{4}\right)-k\pi = \frac{3}{4}\pi:=\gamma.
\end{align*}

For $\mu\in[1/2,+\infty]$, instead, thanks to Lemma \ref{eigen_gap_lemma} we immediately have that $j_{\nu,k+1}-\jnk>\pi$. Therefore, we can conclude: 
\begin{align}\label{eigen-gap}
	\sqrt{\lambda_{k+1}}-\sqrt{\lambda_k} = j_{\nu,k+1}-\jnk\geq \frac{3}{4}\pi, &\;\;\;\textrm{ for }\; \nu\in\left[0,\frac{1}{2}\right], \nonumber
	\\
	\sqrt{\lambda_{k+1}}-\sqrt{\lambda_k} = j_{\nu,k+1}-\jnk\geq \pi, &\;\;\;\textrm{ for }\; \nu\in\left[\frac{1}{2},+\infty\right]. 
\end{align}

Finally, for our further computations we will need the following bound for the Bessel function $J_{\nu}$, which is presented in \cite{landau2000bessel}:
\begin{align}\label{bessel_landau}
	\forall\,\nu>0,\;\forall\, x>0,\;\;|J_{\nu}(x)|\leq\nu^{-\frac{1}{3}}.
\end{align}

\begin{remark}
We notice that in the case $\mu=0$, corresponding to the classical heat equation without potential, we have $\nu=1/2$ and 
\begin{align*}
	\Phi_k(x)=C_{\frac{1}{2},k}\, x^{\frac{1}{2}}J_{\frac{1}{2}}(j_{\frac{1}{2},k} x).
\end{align*}	
	
Using the definition of Bessel function \eqref{bessel_def}, \eqref{bessel_zero_bound} and classical properties of the Gamma function (see, e.g. \cite[Section 1.2]{lebedev1972special}) we have
\begin{align*}
	J_{\frac{1}{2}}(x) = \frac{\sqrt{2}}{\sqrt{\pi x}}\sin(x),\;\;\;	j_{\frac{1}{2},k} = k\pi, \;\;\; C_{\frac{1}{2},k} = \sqrt{\pi},
\end{align*} 
and the corresponding eigenfunctions becomes $\Phi_k(x)=\sqrt{2} \sin(k\pi x)$. Therefore, we recover exactly the spectrum of the one-dimensional Laplace operator on $(0,1)$ with Dirichlet homogeneous boundary conditions.
\end{remark}

\section{Proof of Theorem \ref{control_thm}}\label{moment_sec}

This Section is devoted to the proof of our main result, Theorem \ref{control_thm}. The proof will be divided in three main steps: 
\begin{itemize}
	\item[•] \textbf{Step 1.} Following the ideas of \cite{cannarsa2015cost,cannarsa2018cost,fattorini1971exact}, we will reduce our control problem to a moment problem of which we will give a formal solution, using the properties of the spectrum of the operator $A_{\mu}$ that we introduced in Section \ref{spectrum_sec}. At this stage, we will also define explicitly the control function $f$ that we shall employ.
	\item[•] \textbf{Step 2.} We prove that, if the condition \eqref{fourier_cond_target} is satisfied, then the control function $f$ is $H^1(0,T)$ regular.
	\item[•] \textbf{Step 3.} We show that the control function $f$ is able to drive the solution $u$ of \eqref{heat_hardy_nhb} from the initial state $u_0$ to the target $u_T$.
\end{itemize}

\subsection{Reduction to a moment problem} 
In this part, we treat the problem with formal computations. We will present a rigorous justification in a second moment. 

Let us start expanding the initial condition $u_0\in L^2(0,1)$ and the target $u_T\in L^2(0,1)$ with respect to the basis of the eigenfunctions $(\Phi_k)_{k\geq 1}$. Indeed, we know that there exist two sequences $(\,\rho_k^0)_{k\geq 1},(\,\rho_k^T)_{k\geq 1}\in \ell^2(\NN^*)$ such that, for all $x\in(0,1)$,
\begin{align*}
	u_0(x)=\sum_{k\geq 1}\rho_k^0\Phi_k(x),\;\;\; u_T(x)=\sum_{k\geq 1}\rho_k^T\Phi_k(x).
\end{align*}
Next, we expand also the solution $u$ of \eqref{heat_hardy_nhb} as
\begin{align*}
	u(x,t)=\sum_{k\geq 1}\beta_k(t)\Phi_k(x), \;\;\; (x,t)\in (0,1)\times(0,T),
\end{align*}
with
\begin{align*}
	\sum_{k\geq 1}\beta_k(t)^2<+\infty.
\end{align*}
Therefore, the controllability condition $u(x,T)=u_T(x)$ becomes 
\begin{align}\label{controllability_moment}
	\forall k\geq 1,\;\;\; \beta_k(T)=\rho_k^T.
\end{align}

On the other hand, we notice that the function $v_k(x,t):=\Phi_k(x)e^{\lambda_k(t-T)}$ solves the adjoint problem
\begin{align}\label{heat_hardy_adj}
	\begin{cases}
		\displaystyle v_{k,t}+v_{k,xx}+\frac{\mu}{x^2}v_k=0 & (x,t)\in Q
		\\[6pt]
		v_k(0,t)=v_k(1,t)=0 & t\in (0,T).
	\end{cases}
\end{align}
Combining \eqref{heat_hardy_nhb} and \eqref{heat_hardy_adj} we obtain
\begin{align*}
	0=& \int_0^1 \bigg[v_k\left(u_t-u_{xx}-\frac{\mu}{x^2}u\right)+u\left(v_{k,t}+v_{k,xx}+\frac{\mu}{x^2}v_k\right)\bigg]\,dxdt
	\\
	=& \int_0^1 v_ku\,\Big|_0^T\,dx - \int_0^T v_ku_x\,\Big|_0^1\,dt + \int_0^T uv_{k,x}\,\Big|_0^1\,dt
	\\
	=& \int_0^1 v_k(x,T)u(x,T)\,dx - \int_0^1 v_k(x,0)u(x,0)\,dx 
	\\
	&+ \int_0^T u(1,t)v_{k,x}(1,t)\,dt - \int_0^T u(0,t)v_{k,x}(0,t)\,dt
	\\
	=& \int_0^1 u(x,T)\Phi_k(x)\,dx - \int_0^1 u_0(x)\Phi_k(x)e^{-\lambda_k T}\,dx - \int_0^T f(t)\left(x^{\alpha}v_{k,x}\right)\Big|_{x=0}\,dt
	\\
	=& \;\beta_k(T)-\rho_k^0e^{-\lambda_k T}-r_k\int_0^T f(t)e^{\lambda_k(t-T)}\,dt,
\end{align*}
where we defined
\begin{align}\label{r_k}
	r_k:=\lim_{x\to 0^+} x^{\alpha}\Phi_k'(x).
\end{align}
It follows that
\begin{align*}
	\forall k\geq 1,\;\;\;\beta_k(T)=\rho_k^0e^{-\lambda_k T}+r_k\int_0^T f(t)e^{\lambda_k(t-T)}\,dt;
\end{align*}
hence, the controllability condition \eqref{controllability_moment} implies
\begin{align}\label{moment_cond}
	\forall k\geq 1,\;\;\;r_k\int_0^T f(t)e^{\lambda_kt}\,dt = -\rho_k^0 + \rho_k^Te^{\lambda_k T}.
\end{align}

On the other hand, since we are looking for a solution of the moment problem belonging to $H^1(0,T)$, instead of \eqref{moment_cond} we would rather be interested in a condition involving the derivative of the function $f$. This condition can be obtained integrating by parts in \eqref{moment_cond}, as follows
\begin{align*}
	\int_0^T f(t)e^{\lambda_kt}\,dt = \frac{1}{\lambda_k}f(t)e^{\lambda_k t}\,\bigg|_0^T-\frac{1}{\lambda_k}\int_0^T f'(t)e^{\lambda_k t}\,dt.
\end{align*}
Therefore, the derivative $f'(t)$ has to satisfy
\begin{align}\label{moment_cond_H1}
	\forall k\geq 1,\;\;\;-\frac{r_k}{\lambda_k}\int_0^T f'(t)e^{\lambda_kt}\,dt = -\rho_k^0 +\rho_k^Te^{\lambda_k T}-\frac{r_k}{\lambda_k}\left(f(T)e^{\lambda_k T}-f(0)\right).
\end{align}

\subsubsection{Computation of $r_k$}
For proving the existence of a function $f(t)$ for which the condition \eqref{moment_cond_H1} holds, it will be necessary to know whether $r_k\neq 0$ for all $k$. This property is guaranteed by the following result.
\begin{lemma}
Let $\alpha$ be defined as in \eqref{alpha}. For the eigenfunction $\Phi_k$ it holds 
\begin{align}\label{eigen_asympt}
	r_k:=\lim_{x\to 0^+} x^{\alpha}\Phi_k'(x)=\frac{\cnk\jnk^{\nu}}{2^{\nu}\Gamma(\nu+1)}\left(\frac{1}{2}+\nu\jnk\right)>0, \;\;\; \forall k\geq 1.
\end{align}
Moreover, we have the following asymptotic behavior 
\begin{align}\label{r_k_lim}
	r_k\sim A_{\nu}\jnk^{\nu+\frac{3}{2}}, \;\;\;\textrm{ as }\;k\to +\infty, \;\;\;\textrm{ with } \; A_{\nu}:=\frac{\nu\sqrt{\pi}}{2^{\nu}\Gamma(\nu+1)}.
\end{align}
\end{lemma}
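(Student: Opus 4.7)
The plan is a direct computation grounded in the explicit form $\Phi_k(x) = C_{\nu,k}\, x^{1/2} J_\nu(j_{\nu,k}x)$ from \eqref{spectrum}. First I would substitute the convergent power series \eqref{bessel_def} for $J_\nu$ to rewrite
\begin{equation*}
\Phi_k(x) = C_{\nu,k}\sum_{m\geq 0}\frac{(-1)^m (j_{\nu,k}/2)^{2m+\nu}}{m!\,\Gamma(m+\nu+1)}\, x^{2m+\nu+1/2},
\end{equation*}
differentiate term-by-term on $(0,1)$ (legitimate since $J_\nu$ is analytic away from the origin), and then multiply by $x^{\alpha}$. Since $\alpha = 1/2-\nu$ by \eqref{alpha} and \eqref{nu}, every term with $m\geq 1$ ends up multiplied by a strictly positive power of $x$ and therefore vanishes as $x\to 0^+$. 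Only the $m=0$ contribution survives, producing a finite constant and yielding the claimed closed-form expression for $r_k$ in terms of $C_{\nu,k}$, $j_{\nu,k}$, $\nu$ and $\Gamma(\nu+1)$.

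Positivity is then immediate: for $\mu<1/4$ the definition \eqref{nu} gives $\nu>0$, the zeros $j_{\nu,k}$ are positive, and $C_{\nu,k} = \sqrt{2}/|J_\nu'(j_{\nu,k})|$ is positive by construction, so every factor in the resulting expression has the right sign and $r_k>0$ follows.

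For the asymptotic as $k\to+\infty$ the problem reduces to an asymptotic for $C_{\nu,k}$, which in turn requires the large-argument behaviour of $|J_\nu'|$ at its zeros. My plan is to invoke the Hankel expansion
\begin{equation*}
J_\nu(z) = \sqrt{\tfrac{2}{\pi z}}\,\cos\!\left(z-\tfrac{\nu\pi}{2}-\tfrac{\pi}{4}\right) + O(z^{-3/2})
\end{equation*}
together with its differentiated version. Since $J_\nu(j_{\nu,k})=0$, the cosine factor must vanish at $j_{\nu,k}$ to leading order, hence the associated $|\sin(\cdot)|$ is $1+O(j_{\nu,k}^{-2})$, which gives $|J_\nu'(j_{\nu,k})|\sim \sqrt{2/(\pi j_{\nu,k})}$ and consequently $C_{\nu,k}\sim \sqrt{\pi\,j_{\nu,k}}$. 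Combined with the closed form from the first step this produces the stated growth rate $r_k\sim A_\nu j_{\nu,k}^{\nu+3/2}$. An alternative route, which avoids invoking Hankel directly, uses the recurrence \eqref{bessel_rec} to write $-J_\nu'(j_{\nu,k}) = J_{\nu+1}(j_{\nu,k})$ and applies the standard large-argument asymptotic of $J_{\nu+1}$ to obtain the same conclusion.

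The calculation is almost entirely routine. The only step requiring real care is the asymptotic one: the Hankel expansion must be used to sufficient precision to be sure the oscillatory error terms do not interfere with the first-order constant, and one then needs to track the powers of $j_{\nu,k}$ appearing in $C_{\nu,k}$ together with those already present in the closed form for $r_k$, so that they combine to give exactly $j_{\nu,k}^{\nu+3/2}$ with the advertised coefficient $A_\nu$.
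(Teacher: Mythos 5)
Your plan coincides with the paper's own proof: the paper computes $x^{\alpha}\Phi_k'(x)$ from \eqref{spectrum}, uses the small\--argument behaviour $J_{\nu}(y)\sim (y/2)^{\nu}/\Gamma(\nu+1)$ (equivalent to keeping only the $m=0$ term of the series, as you do), notes that $\alpha-\frac{1}{2}+\nu=0$ so the limit is a finite constant, and then gets $\cnk\sim(\pi\jnk)^{1/2}$ from $|J_{\nu}'(\jnk)|=|J_{\nu+1}(\jnk)|$ combined with $J_{\nu}(\xi)^2+J_{\nu+1}(\xi)^2\sim 2/(\pi\xi)$ — exactly your second route. So the method is the right one.

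The problem is the step you dismiss as "routine" and do not actually carry out: it does \emph{not} yield the claimed closed form. Differentiating $x^{2m+\nu+1/2}$ and multiplying by $x^{\alpha}=x^{1/2-\nu}$ leaves the coefficient $\bigl(2m+\nu+\frac{1}{2}\bigr)$ on $x^{2m}$, so the surviving $m=0$ term gives
\begin{align*}
	r_k=\frac{\cnk\jnk^{\nu}}{2^{\nu}\Gamma(\nu+1)}\left(\nu+\frac{1}{2}\right),
\end{align*}
with the factor $\nu+\frac{1}{2}$ and \emph{not} $\frac{1}{2}+\nu\jnk$: the extra $\jnk$ in \eqref{eigen_asympt} (which traces back to mishandling the chain rule in the second term of \eqref{eigen_deriv}, where $J_{\nu}'(\jnk x)\sim \nu(\jnk x)^{\nu-1}/(2^{\nu}\Gamma(\nu+1))$ contributes $\jnk^{\nu}$ after multiplication by $\jnk$, not $\jnk^{\nu+1}$) is an error. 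Consequently the asymptotics change too: with $\cnk\sim(\pi\jnk)^{1/2}$ one obtains $r_k\sim\bigl(\nu+\frac{1}{2}\bigr)\sqrt{\pi}\,\jnk^{\nu+\frac{1}{2}}/(2^{\nu}\Gamma(\nu+1))$, not $\jnk^{\nu+\frac{3}{2}}$. The case $\mu=0$ is a decisive sanity check that your write\--up should have run: there $\nu=\frac{1}{2}$, $\alpha=0$, $\Phi_k(x)=\sqrt{2}\sin(k\pi x)$ and hence $r_k=\sqrt{2}\,k\pi=\sqrt{2}\,\jnk$, consistent with the exponent $\nu+\frac{1}{2}=1$ and flatly incompatible with growth like $\jnk^{2}$. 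So: same approach as the paper, but by asserting rather than performing the computation you end up certifying a formula that the computation contradicts; the corrected exponent also propagates into the later estimates involving $\lambda_k/r_k$ (which becomes of order $\jnk^{\frac{3}{2}-\nu}$ rather than $\jnk^{\frac{1}{2}-\nu}$, harmless for convergence but changing the power of $k$ in \eqref{fourier_cond_target}).
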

\begin{proof}
We recall that 
\begin{align*}
	\Phi_k(x)=\cnk\, x^{\frac{1}{2}}J_{\nu}(\jnk x), \;\;\;\textrm{ with }\;\;\; \cnk=\frac{\sqrt{2}}{|J_{\nu}'(\jnk)|}.
\end{align*}
Thus, a direct computation gives 
\begin{align}\label{eigen_deriv}
	x^{\alpha}\Phi_k'(x) = \frac{\cnk}{2}x^{\alpha-\frac{1}{2}}J_{\nu}(\jnk x)+\cnk\jnk x^{\alpha+\frac{1}{2}}J_{\nu}'(\jnk x).
\end{align}
	
Moreover, from the definition of $J_{\nu}$ given in \eqref{bessel_def}, it is straightforward to obtain the following property: for all $\nu\geq 0$
\begin{align}\label{bessel_lim}
	J_{\nu}(x)\sim \frac{1}{\Gamma(\nu+1)}\left(\frac{x}{2}\right)^{\nu},\;\;\;\textrm{ as }\;x\to 0^+.
\end{align}
Using \eqref{bessel_lim} in \eqref{eigen_deriv}, we obtain
\begin{align*}
	r_k =\lim_{x\to 0^+}\left[\frac{\cnk}{2}x^{\alpha-\frac{1}{2}}J_{\nu}(\jnk x)+\cnk\jnk x^{\alpha+\frac{1}{2}}J_{\nu}'(\jnk x)\right] =\lim_{x\to 0^+}\frac{\cnk\jnk^{\nu}}{2^{\nu}\Gamma(\nu+1)}\left(\frac{1}{2}+\nu\jnk\right)x^{\alpha-\frac{1}{2}+\nu}.
\end{align*}
	
Hence, \eqref{eigen_asympt} follows from the definition of $\alpha$. Moreover, we clearly have $r_k>0$ for all $k\geq 1$.
	
For obtaining the behavior of $r_k$ as $k\to +\infty$, we will need to use the following further property of the Bessel function $J_{\nu}$ (see \cite[Section 7.21]{watson1995treatise}):
\begin{align}\label{bessel_asympt}
	J_{\nu}(\xi)^2+J_{\nu+1}(\xi)^2\sim \frac{2}{\pi\xi},\;\;\;\textrm{ as }\;\xi\to +\infty.
\end{align}
In particular, from \eqref{bessel_asympt} we have
\begin{align*}
	J_{\nu+1}(\jnk)^2\sim \frac{2}{\pi\jnk},\;\;\;\textrm{ as }\;k\to +\infty,
\end{align*} 
and this immediately implies that $\cnk\sim (\pi\jnk)^{\frac{1}{2}}$, as $k\to +\infty$. Therefore, 
\begin{align*}
	\lim_{k\to +\infty}r_k &:= \lim_{k\to +\infty}\frac{\cnk\jnk^{\nu+1}}{2^{\nu}\Gamma(\nu+1)}\left(\nu+\frac{1}{2}\jnk^{\,-1}\right)
	\\
	&= \lim_{k\to +\infty}\frac{\sqrt{\pi}\jnk^{\nu+\frac{3}{2}}}{2^{\nu}\Gamma(\nu+1)}\left(\nu+\frac{1}{2}\jnk^{\,-1}\right) = \frac{\nu\sqrt{\pi}}{2^{\nu}\Gamma(\nu+1)}\lim_{k\to +\infty}\jnk^{\nu+\frac{3}{2}},
\end{align*}
and the proof is concluded.
\end{proof}

\subsection{Formal solution of the moment problem}\label{moment_sol_sec}
We present here the formal computations that show that the moment problem \eqref{moment_cond_H1} has a solution $f$. We leave the rigorous justification of these computations, as well as the proof of the $H^1$ regularity of $f$, to the next (sub)section.

For defining the function $f$ satisfying \eqref{moment_cond_H1}, in what follows we firstly need to introduce a sequence $(\sigma_\ell)_{\ell\geq 0}$ in $L^2(0,T)$ which is biorthogonal to $\left(e^{\lambda_k t}\right)_{k\geq 0}$, that is
\begin{align*}
	\int_0^T \sigma_\ell(t)e^{\lambda_k t}\,dt =\delta_{k,\ell}.
\end{align*}

The existence of such a sequence is a consequence of the gap condition \eqref{eigen-gap}, and it is guaranteed by the following result (\cite[Theorem 2.4 and Corollary 1]{cannarsa2015cost}, see also \cite{fattorini1971exact,fattorini1974uniform}). 
\begin{theorem}\label{thm-biortho1-gen}
Assume that $\forall k\geq 0, \lambda_k \geq 0$, and that there is some $\gamma _{\text{min}}>0$ such that
\begin{align*}
	\forall k \geq 0, \sqrt{\lambda_{k+1}} - \sqrt{\lambda_k}  \geq \gamma _{\text{min}} .
\end{align*}
Then there exists a family $(\sigma _\ell)_{\ell\geq 0}$ which is biorthogonal to the family $(e^{\lambda _kt})_{k\geq 0}$ in $L^2(0,T)$:
\begin{align*}
	\forall \ell,k \geq 0, \quad \int _0 ^T \sigma_\ell(t)e^{\lambda_kt} \, dt = \delta_{k,\ell} .
\end{align*}
Moreover, it satisfies
\begin{align}\label{sigma_int}
	\forall k\geq 1,\;\;\; \int_0^T \sigma_k(t)\,dt =0
\end{align} 
and the $L^2(0,T)$-bound
\begin{align}\label{sigma_est}
	\forall k\geq 1,\;\;\; \norm{\sigma_k}{L^2(0,T)}\leq \frac{C(T+1)}{T}e^{P\sqrt{\lambda_k}}e^{-\lambda_k T}e^{\frac{C}{T}};
\end{align}
where $C$ is a universal constant independent of $T$, $\gamma _{\text{min}}$ and $\ell$. 
\end{theorem}

\noindent Now, let us define the function $f$ as follows:
\begin{align}\label{f_def}
	f(t):=\int_0^t g(s)\,ds, \;\;\;\textrm{ with }\;g(t):=\sum_{k\geq 1} \frac{\lambda_k}{r_k}\Big(\rho_k^0-\rho_k^Te^{\lambda_k T}\Big)\sigma_k(t).
\end{align}

It is straightforward that, if $g\in L^2(0,T)$, then $f\in H^1(0,T)$ with $f(0)=0$ and $f'(t)=g(t)$; moreover thanks to \eqref{sigma_int} we have, at least formally,
\begin{align*}
	f(T)=\int_0^T \sum_{k\geq 1} \frac{\lambda_k}{r_k}\Big(\rho_k^0-\rho_k^Te^{\lambda_k T}\Big)\sigma_k(s)\,ds = \sum_{k\geq 1} \frac{\lambda_k}{r_k}\Big(\rho_k^0-\rho_k^Te^{\lambda_k T}\Big)\int_0^T\sigma_k(s)\,ds=0.
\end{align*}
Finally,
\begin{align*}
	-\frac{r_k}{\lambda_k}\int_0^T f'(t)e^{\lambda_k t}\,dt &= -\frac{r_k}{\lambda_k}\int_0^T g(t)e^{\lambda_k t}\,dt = -\frac{r_k}{\lambda_k}\int_0^T \left(\sum_{\ell\geq 1} \frac{\lambda_{\ell}}{r_{\ell}}\Big(\rho_{\ell}^0-\rho_{\ell}^Te^{\lambda_{\ell} T}\Big)\sigma_{\ell}(t)\right)e^{\lambda_k t}\,dt
	\\
	&= -\frac{r_k}{\lambda_k}\sum_{\ell\geq 1}\frac{\lambda_{\ell}}{r_{\ell}}\Big(\rho_{\ell}^0-\rho_{\ell}^Te^{\lambda_{\ell} T}\Big)\int_0^T \sigma_{\ell}(t)e^{\lambda_k t}\,dt 
	\\
	&= -\frac{r_k}{\lambda_k}\sum_{\ell\geq 1}\frac{\lambda_{\ell}}{r_{\ell}}\Big(\rho_{\ell}^0-\rho_{\ell}^Te^{\lambda_{\ell} T}\Big)\delta_{k,\ell} = -\rho_k^0+\rho_k^Te^{\lambda_k T},
\end{align*}
and the moment problem \eqref{moment_cond_H1} is formally satisfied.

\subsection{$H^1$ regularity of the control and controllability result}

\subsubsection{The control $f$ belongs to $H^1(0,T)$}
We have to check that the control $f$ defined as in \eqref{f_def} belongs to $H^1(0,T)$. To this end, we are going to prove, instead, that the function $g$ belongs to $L^2(0,T)$. First of all, we notice that we can split $g:=g^0-g^T$ with
\begin{align*}
	g^0(t):=\sum_{k\geq 1} \frac{\lambda_k}{r_k}\rho_k^0\sigma_k(t)
\end{align*}
and
\begin{align*}
	g^T(t):=\sum_{k\geq 1} \frac{\lambda_k}{r_k}\rho_k^Te^{\lambda_k T}\sigma_k(t).
\end{align*}
Now, from \eqref{r_k_lim} and \eqref{sigma_est} we have 
\begin{align*}
	\norm{g^0(t)}{L^2(0,T)} & \leq C(T)\sum_{k\geq 1} |\,\rho_k^0|\left|\frac{\lambda_k}{r_k}\right|e^{P\sqrt{\lambda_k}}e^{-\lambda_k T} \leq C(T)\sum_{k\geq 1} |\,\rho_k^0|\jnk^{\frac{1}{2}-\nu}e^{P\jnk}e^{-\jnk^2 T} 
	\\
	&\leq C(T)\left(\sum_{k\geq 1} |\,\rho_k^0|^2\right)^{\frac{1}{2}}\left(\sum_{k\geq 1} k^{1-2\nu}e^{2P\jnk}e^{-2\jnk^2 T}\right)^{\frac{1}{2}}
	\\
	& \leq C(T)\,\norm{u_0}{L^2(0,1)}\left(\sum_{k\geq 1} k^{1-2\nu}e^{2P\jnk}e^{-2\pi^2\jnk^2 T}\right)^{\frac{1}{2}} < +\infty, 
\end{align*}
where the last series is convergent due to the presence of the exponential with negative sign.

This can be easily shown employing the bounds \eqref{bessel_zero_upper_bound} and \eqref{bessel_zero_lower_bound} for the zeros of $J_{\nu}$. Indeed, let us assume that $\nu\in [0,1/2]$ (the case $\nu\in[1/2,+\infty]$ is analogous and we leave it to the reader). We have

\begin{align*}
	\sum_{k\geq 1} k^{1-2\nu}e^{2P\jnk}e^{-2\pi^2\jnk^2 T} \leq \sum_{k\geq 1} k^{1-2\nu}e^{2P\pi k}e^{-2\pi^2\left(k-\frac{1}{4}\right)^2 T}.
\end{align*}
Now, an explicit computation gives
\begin{align*}
	\lim_{k\to +\infty} \frac{(k+1)^{1-2\nu}e^{2P\pi(k+1)}e^{-2\pi^2\left(k+1-\frac{1}{4}\right)^2T}}{k^{1-2\nu}e^{2P\pi k}e^{-2\pi^2\left(k-\frac{1}{4}\right)^2T}} = e^{2P\pi}\lim_{k\to +\infty} \left(\frac{k+1}{k}\right)^{1-2\nu}e^{-2\pi^2T(2k+2)}=0.
\end{align*}

This immediately ensures the convergence of the series. For what concerns now the estimate of $g^T(t)$, with similar computations we get
\begin{align*}
	\norm{g^T(t)}{L^2(0,T)} & \leq C(T)\sum_{k\geq 1} |\,\rho_k^T|k^{\frac{1}{2}-\nu}e^{P\pi k}.
\end{align*}
Therefore, if we assume that the series 
\begin{align*}
	\sum_{k\geq 1}|\,\rho_k^T|k^{\frac{1}{2}-\nu}e^{P\pi k} = \sum_{k\geq 1}|\,\rho_k^T|k^{\frac{1}{2}-\frac{1}{2}\sqrt{1-4\mu}}e^{P\pi k}.
\end{align*}
is convergent, we have that also $g^T\in L^2(0,T)$ and we can conclude that the function $g$ belongs to $L^2(0,T)$. Thus, the control $f$ belongs to $H^1(0,T)$.

\subsubsection{The control $f$ drives the solution from $u_0$ to $u_T$}
We show in this (sub)section that the control $f$ that we introduced in \eqref{f_def} is able to drive the solution of \eqref{heat_hardy_nhb} from the initial state $u_0$ to the target $u_T$. With this purpose, let us remind the change of variables 
\begin{align*}
	\psi(x,t):=u(x,t)-x^{\alpha}\frac{p(x)}{p(0)}f(t),\;\;\; p(x):=1-x^{1-2\alpha},
\end{align*}
that transforms our original equation \eqref{heat_hardy_nhb} in
\begin{align*}
	\begin{cases}
		\displaystyle \psi_t-\psi_{xx}-\frac{\mu}{x^2}\psi=-x^{\alpha}\frac{p(x)}{p(0)}f'(t), & (x,t)\in Q
		\\[6pt]
		\psi(0,t)=\psi(1,t)=0, & t\in (0,T)
		\\[6pt]
		\psi(x,0)=u_0(x), & x\in (0,1).
	\end{cases}
\end{align*} 
Now, for a fixed $\varepsilon>0$ we have
\begin{align*}
	\int_{\varepsilon}^T&\int_0^1 -x^{\alpha}\frac{p(x)}{p(0)}g(t)\Phi_k(x)e^{\lambda_k t}\,dxdt 
	\\
	&= \int_{\varepsilon}^T\int_0^1 \left(\psi_t-\psi_{xx}-\frac{\mu}{x^2}\psi\right)\Phi_k(x)e^{\lambda_k t}\,dxdt
	\\
	&= \int_0^1 \psi\Phi_ke^{\lambda_k t}\,\Big|_{\varepsilon}^T\,dx + \int_{\varepsilon}^T \psi\Phi_k'e^{\lambda_k t}\,\Big|_0^1\,dt - \int_{\varepsilon}^T\int_0^1 \psi\left(-\Phi_k''-\frac{\mu}{x^2}\Phi_k-\lambda_k\Phi_k\right)e^{\lambda_k t}\,dxdt
	\\
	&=e^{\lambda_k T}\int_0^1 \psi(x,T)\Phi_k(x)\,dx - e^{\lambda_k \varepsilon}\int_0^1 \psi(x,\varepsilon)\Phi_k(x)\,dx;
\end{align*}
hence, taking the limit for $\varepsilon\to 0^+$ we find
\begin{align*}
	\int_Q -x^{\alpha}\frac{p(x)}{p(0)}g(t)\Phi_k(x)e^{\lambda_k t}\,dxdt = e^{\lambda_k T}\int_0^1 \psi(x,T)\Phi_k(x)\,dx - \rho_k^0.
\end{align*}
From this last identity, it immediately follows
\begin{align*}
	e^{\lambda_k T}\int_0^1 \psi(x,T)\Phi_k(x)\,dx &= \rho_k^0 + \left(\int_0^T g(t)e^{\lambda_k t}\,dt\right)\left(\int_0^1 -x^{\alpha}\frac{p(x)}{p(0)}\Phi_k(x)\,dx\right)
	\\
	&= \rho_k^0 -\frac{\lambda_k}{r_k}\left(-\rho_k^0+\rho_k^Te^{\lambda_k T}\right)\left(\int_0^1 -x^{\alpha}\frac{p(x)}{p(0)}\Phi_k(x)\,dx\right).
\end{align*}
Moreover, 
\begin{align*}
	\int_0^1 & -x^{\alpha}\frac{p(x)}{p(0)}\Phi_k(x)\,dx 
	\\
	&= \frac{1}{\lambda_k}\int_0^1 -x^{\alpha}\frac{p(x)}{p(0)}\lambda_k\Phi_k(x)\,dx = \frac{1}{\lambda_k}\int_0^1 x^{\alpha}\frac{p(x)}{p(0)}\left(\Phi_k''(x)+\frac{\mu}{x^2}\Phi_k(x)\right)\,dx
	\\
	&= \frac{1}{\lambda_k}x^{\alpha}\frac{p(x)}{p(0)}\Phi_k'(x)\,\bigg|_0^1 - \frac{1}{\lambda_k}\int_0^1 \left(x^{\alpha}\frac{p(x)}{p(0)}\right)'\Phi_k'(x)\,dx + \frac{1}{\lambda_k}\int_0^1 x^{\alpha}\frac{p(x)}{p(0)}\frac{\mu}{x^2}\Phi_k(x)\,dx 
	\\
	&=-\frac{r_k}{\lambda_k} - \frac{1}{\lambda_k}\left(x^{\alpha}\frac{p(x)}{p(0)}\right)'\Phi_k(x)\,\bigg|_0^1 + \frac{1}{\lambda_k}\int_0^1 \left[\left(x^{\alpha}\frac{p(x)}{p(0)}\right)''+ \mu x^{\,\lambda-2}\frac{p(x)}{p(0)}\right]\Phi_k(x)\,dx 
	\\
	&=-\frac{r_k}{\lambda_k} + \frac{1}{\lambda_k p(0)}\int_0^1 \Big[\left(x^{\alpha}p(x)\right)''+ \mu x^{\,\lambda-2}p(x)\Big]\Phi_k(x)\,dx = -\frac{r_k}{\lambda_k},
\end{align*}
since from the definition of $p(x)$ it is straightforward to check that
\begin{align*}
	\left(x^{\alpha}p(x)\right)''+ \mu x^{\,\lambda-2}p(x)=0. 
\end{align*} 
Hence, we get
\begin{align*}
	e^{\lambda_k T}\int_0^1 \psi(x,T)\Phi_k(x)\,dx = \rho_k^Te^{\lambda_k T},
\end{align*}
which of course implies
\begin{align*}
	\int_0^1 \psi(x,T)\Phi_k(x)\,dx = \rho_k^T = \int_0^1 u_T(x)\Phi_k(x)\,dx.
\end{align*}
Therefore, we have $\psi(x,T)=u_T(x)$ and, from \eqref{cv}, we can finally conclude that
\begin{align*}
	u(x,T)=\psi(x,T)-x^{\alpha}\frac{p(x)}{p(0)}f(T)=u_T(x),
\end{align*}
since $f(T)=0$.

\section{The cost of null controllability}\label{cost_sec}
In Section \ref{moment_sec}, we obtained the boundary controllability of \eqref{heat_hardy_nhb} assuming $\mu\neq 1/4$. We are going to show now that this restriction, that was coming formally from our previous computations, is actually justified. 

This justification will be provided by an analysis of the cost of null controllability, i.e. the cost for driving an initial datum $u_0$ to zero in time $T$. In particular, we are going to prove that the cost of null controllability blows up as $\mu\to 1/4^-$, thus meaning that null-controllability fails in this critical case.

\begin{proof}[Proof of Theorem \ref{cost_thm}] We will follow the argument presented for the proof of \cite[Theorem 2.2]{cannarsa2015cost}. Moreover, since we are interested in analyzing the cost of controllability as $\mu\to 1/4^-$, in what follows we will assume $\mu\in[0,1/4]$ which, we remind, corresponds to $\nu\in[0,1/2]$.
	
\textbf{Step 1. Upper bound}. First of all, we remind that in \eqref{f_def} we have constructed the following admissible control $f$ that drives the solution $u$ of \eqref{heat_hardy_nhb} to zero in time $T$:
\begin{align*}
	f(t):=\int_0^t g(s)\,ds, \;\;\;\textrm{ with }\;g(t):=\sum_{k\geq 1} \frac{\lambda_k}{r_k}\rho_k^0\sigma_k(t).
\end{align*}
Now, by definition of the controllability cost we have
\begin{align*}
	C^{H^1}(\,\mu,u_0)\leq\norm{f}{H^1(0,T)}.
\end{align*}
	
Therefore, we only have to provide an upper bound for the $H^1$ norm of $f$ which, since $f(0)=0$, is equivalent to bound the $L^2$ norm of $g$. This can be done as follows
\begin{align*}
	\norm{g}{L^2(0,T)} &\leq \sum_{k\geq 1} \frac{\lambda_k}{r_k}|\,\rho_k^0|\,\norm{\sigma_k(t)}{L^2(0,T)} = 2^{\nu}\Gamma(\nu+1)\sum_{k\geq 1} \frac{\jnk^{2-\nu}}{\cnk}\left(\frac{1}{2}+\nu\jnk\right)^{-1}|\,\rho_k^0|\,\norm{\sigma_k(t)}{L^2(0,T)}
	\\
	&= 2^{\nu-\frac{1}{2}}\Gamma(\nu+1)\sum_{k\geq 1} \jnk^{2-\nu}|J_{\nu}'(\jnk)|\left(\frac{1}{2}+\nu\jnk\right)^{-1}|\rho_k^0|\norm{\sigma_k(t)}{L^2(0,T)}
	\\
	&\leq \frac{2^{\nu-\frac{1}{2}}\Gamma(\nu+1)}{\nu}\sum_{k\geq 1} \jnk^{1-\nu}|J_{\nu}'(\jnk)|\,|\,\rho_k^0|\,\norm{\sigma_k(t)}{L^2(0,T)}
	\\
	&\leq \frac{2^{\nu-\frac{1}{2}}\Gamma(\nu+1)}{\nu}\left(\sum_{k\geq 1} |\,\rho_k^0|^2\right)^{\frac{1}{2}}\left(\sum_{k\geq 1} \jnk^{2-2\nu}|J_{\nu}'(\jnk)|^2\norm{\sigma_k(t)}{L^2(0,T)}^2\right)^{\frac{1}{2}}.
\end{align*}
From \eqref{bessel_rec} and \eqref{bessel_landau} we have 
\begin{align*}
	|J_{\nu}'(\jnk)|^2 = |J_{\nu+1}(\jnk)|^2\leq (1+\nu)^{-\frac{2}{3}}\leq 1.
\end{align*}
Therefore, employing also \eqref{sigma_est} combined with \eqref{bessel_zero_bound}, we obtain
\begin{align*}
	\sum_{k\geq 1} \jnk^{2-2\nu}|J_{\nu}'(\jnk)|^2\norm{\sigma_k(t)}{L^2(0,T)}^2 \leq Ce^{\frac{C}{T}}\sum_{k\geq 1} k^{2-2\nu}e^{C\jnk}e^{-2\jnk T}\leq Ce^{\frac{C}{T}}\sum_{k\geq 1} k^{2-2\nu}e^{-\pi^2\left(k-\frac{1}{4}\right)^2T}. 
\end{align*} 
	
Moreover, this last series is clearly convergent due to the presence of the exponential with negative argument. This can be verified with the following explicit computation: 
\begin{align*}
	\lim_{k\to +\infty} \frac{(k+1)^{2-2\nu}e^{-\pi^2\left(k+1-\frac{1}{4}\right)^2T}}{k^{2-2\nu}e^{-\pi^2\left(k-\frac{1}{4}\right)^2T}} = \lim_{k\to +\infty} \left(\frac{k+1}{k}\right)^{2-2\nu}e^{-\pi^2T(2k+2)}=0.
\end{align*}
Hence
\begin{align*}
	\norm{g}{L^2(0,T)} &\leq \frac{C}{\nu}e^{\frac{C}{T}}\Big[\pi^{2-2\nu}\Gamma(\nu+1)2^{\nu-\frac{1}{2}}\Big]\,\norm{u_0}{L^2(0,1)} 
	\\
	&= \frac{Ce^{\frac{C}{T}}}{\sqrt{1-4\mu}}\left[\pi^{\,2-\sqrt{1-4\mu}}\Gamma\left(1+\frac{1}{2}\sqrt{1-4\mu}\right)2^{\frac{1}{2}+\frac{1}{2}\sqrt{1-4\mu}}\right]\norm{u_0}{L^2(0,1)}.
\end{align*}
Finally, we observe that for all $\mu\in[0,1/4]$ we have
\begin{align*}
	\pi^{\,2-\sqrt{1-4\mu}}\leq \pi^2, \;\;\; \Gamma\left(1+\frac{1}{2}\sqrt{1-4\mu}\right)\leq\Gamma(1)=1, \;\;\; 2^{\frac{1}{2}+\frac{1}{2}\sqrt{1-4\mu}}\leq 2.
\end{align*}
Therefore, we can conclude  
\begin{align*}
	\norm{g}{L^2(0,T)} \leq \frac{Ce^{\frac{C}{T}}}{\sqrt{1-4\mu}}\norm{u_0}{L^2(0,1)},
\end{align*}
and this, of course, implies 
\begin{align*}
	C^{H^1}(\,\mu,u_0)\leq \frac{Ce^{\frac{C}{T}}}{\sqrt{1-4\mu}}\norm{u_0}{L^2(0,1)}
\end{align*}
and 
\begin{align*}
	C^{H^1}_{\textrm{bd-ctr}}(\,\mu)\leq \frac{Ce^{\frac{C}{T}}}{\sqrt{1-4\mu}}.
\end{align*}
	
\textbf{Step 2. Lower bound}. For obtaining now the lower bounds for the controllability cost, we will need the following result (see  \cite[Theorem 2.5]{cannarsa2015cost})
	
\begin{theorem}\label{thm-guichal-gen}
Assume that $ \forall k \geq 1, \lambda_k \geq 0$, and that there is some $0 < \gamma _{\text{min}} \leq \gamma _{\text{max}}$ such that
\begin{align}\label{gap-max}
	\gamma _{\text{min}} \leq \sqrt{\lambda _{k+1}} - \sqrt{\lambda _{k}}  \leq \gamma _{\text{max}} .
\end{align}
Then any family $(\sigma_\ell)_{\ell\geq 1}$ which is biorthogonal to the family $(e^{\lambda_kt})_{k\geq 1}$ in $L^2(0,T)$ satisfies:
\begin{align}\label{sigma_est_low}
	\norm{\sigma_k}{L^2(0,T)}\geq \frac{C}{(k+1)!\pi^{2k}T^k(T+1)}e^{-\lambda_k T}e^{\frac{C}{T}},
\end{align}
where $C>0$ is a positive constant independent of $T$ and $\ell$.
\end{theorem}
	
Since we are considering $\mu\in[0,1/4]$ (which corresponds to $\nu\in[0,1/2]$, using \eqref{bessel_zero_upper_bound} and \eqref{bessel_zero_lower_bound}, we see that assumption \eqref{gap-max} is satisfied with
\begin{align*}
	\gamma _{\text{min}} := \frac{3\pi}{4}, \quad \text{ and } \quad \gamma _{\text{max}} := \frac{5\pi}{4}.
\end{align*}
	
Moreover, in what follows, we are going to use the moment condition \eqref{moment_cond}, that in this case reads
\begin{align}\label{moment_cond_zero}
	r_k\int_0^T f(t)e^{\lambda_k t}\,dt=-\rho_k^0.
\end{align}
	
Choose $k\geq 1$, $u_0 = \Phi_k$ and denote $f_k$ an admissible control. From \eqref{moment_cond_zero} we have that the sequence $(-r_kf_k(t))_{k\geq 1}$ is biorthogonal to $\left(e^{\lambda_k t}\right)_{k\geq 1}$ in $L^2(0,T)$. Then, from \eqref{sigma_est_low} we deduce that 
\begin{align*}
	\norm{r_kf_k(t)}{L^2(0,T)}\geq \frac{C}{(k+1)!\pi^kT^k(T+1)}e^{-\lambda_k T}e^{\frac{C}{T}},
\end{align*}
which, of course, implies
\begin{align*}
	\norm{f_k(t)}{L^2(0,T)}\geq \frac{1}{r_k}\frac{C}{(k+1)!\pi^kT^k(T+1)}e^{-\lambda_k T}e^{\frac{C}{T}}.
\end{align*}
Using now the expression \eqref{r_k} for $r_k$, we have
\begin{align*}
	\norm{f_k(t)}{L^2(0,T)}\geq \frac{2^{\nu-\frac{1}{2}}\Gamma(\nu+1)}{\nu\jnk^{\nu}}|J_{\nu}'(\jnk)|\left(\frac{1}{2\nu}+\jnk\right)^{-1}\frac{C}{(k+1)!\pi^kT^k(T+1)}e^{-\lambda_k T}e^{\frac{C}{T}}.
\end{align*}
	
Therefore, choosing e.g. $k=1$, we obtain that there exists a constant $C$, not depending on $\mu$ or $T$, such that  
\begin{align*}
	\norm{f_1(t)}{L^2(0,T)}\geq \frac{C}{\nu}|J_{\nu}'(\jnk)|\frac{e^{\frac{C}{T}}}{T(T+1)}.
\end{align*}
	
Finally, from \cite[Corollary 2]{cannarsa2015cost} we have that there exists a constant $C>0$, not depending on $\mu$, such that
\begin{align*}
	|J_{\nu}'(\jnk)|\geq C,
\end{align*}
and we thus obtain
\begin{align*}
	\norm{f_1(t)}{L^2(0,T)}\geq \frac{C}{\sqrt{1-4\mu}}\frac{e^{\frac{C}{T}}}{T(T+1)}.
\end{align*}
	
\noindent We conclude proving \eqref{control_cost_est}. First of all, we recall that 
\begin{align}\label{rho_zero_exp}
	\rho_k^0 = (u_0,\Phi_k)_{L^2(0,1)} = \int_0^1 u_0(x)\frac{\sqrt{2}}{|J_{\nu}'(\jnk)|}x^{\frac{1}{2}}J_{\nu}(\jnk x)\,dx.
\end{align}
	
Now, we would like to pass to the limit as $\mu\to 1/4^-$ in this last expression. This procedure is justified by a continuity argument described in \cite[Section 7.2]{cannarsa2015cost}, which we are not going to repeat here. Therefore, taking the limit $\mu\to 1/4^-$ in \eqref{rho_zero_exp} we obtain
\begin{align*}
	\lim_{\mu\to 1/4^-}\rho_k^0 = \int_0^1 u_0(x)\frac{\sqrt{2}}{|J_0'(\jk)|}x^{\frac{1}{2}}J_0(\jk x)\,dx=(u_0,\Phi_{0,k})_{L^2(0,1)},
\end{align*}
where we have set 
\begin{align*}
	\Phi_{0,k}(x):=\frac{\sqrt{2}}{|J_0'(\jk)|}x^{\frac{1}{2}}J_0(\jk x).
\end{align*}
	
We stress the fact that, for all $k\geq 1$, $\Phi_{0,k}(x)$ is the eigenfunction associated to $\nu=0$ which, according to \eqref{nu}, is the value that the parameter $\nu$ takes when $\mu=1/4$. 
	
Moreover, we remind that the family $(\Phi_{0,k})_{k\geq 1}$ is an orthonormal basis of $L^2(0,1)$. Therefore we can find a $k$ such that $(u_0,\Phi_{0,k})_{L^2(0,1)}\neq 0$. This implies that there exists a constant $C_0(u_0)$ and $k_0$ such that, for $\mu$ sufficiently close to $1/4$ we have
\begin{align*}
	|\,\rho_{k_0}^0|\geq C_0(u_0).
\end{align*}
Coming back to \eqref{moment_cond_zero}, we then have
\begin{align*}
	\frac{|\,\rho_{k_0}^0|}{r_{k_0}}\leq\left(\int_0^T e^{2\lambda_{k_0} t}\,dt\right)^{\frac{1}{2}}\left(\int_0^T f(t)^2\,dt\right)^{\frac{1}{2}},
\end{align*}
which implies
\begin{align*}
	\frac{C_0(u_0)}{r_{k_0}}\leq\left(\frac{e^{2\lambda_{k_0} T}-1}{2\lambda_{k_0}}\right)^{\frac{1}{2}}\norm{f}{L^2(0,T)}.
\end{align*}
Therefore,
\begin{align*}
	\norm{f}{L^2(0,T)} &\geq \frac{C_0(u_0)}{r_{k_0}}\left(\frac{2\lambda_{k_0}}{e^{2\lambda_{k_0} T}-1}\right)^{\frac{1}{2}} = \frac{C_0(u_0)2^{\nu}\Gamma(\nu+1)}{C_{\nu,k_0}j_{\nu,k_0}^{\nu}}\left(\frac{1}{2}+\nu j_{\nu,k_0}\right)^{-1}\left(\frac{2\lambda_{k_0}}{e^{2\lambda_{k_0} T}-1}\right)^{\frac{1}{2}}
	\\
	&\geq C_0(u_0)\frac{2^{\nu}\Gamma(\nu+1)}{\nu}\left[C_{\nu,k_0}j_{\nu,k_0}^{\nu}\left(\frac{1}{2}+\nu j_{\nu,k_0}\right)\right]^{-1}\left(\frac{2j_{\nu,k_0}}{e^{2j_{\nu,k_0} T}-1}\right)^{\frac{1}{2}}
	\\
	&= C_0(u_0)\frac{2^{\nu-\frac{1}{2}}\Gamma(\nu+1)}{\nu}\frac{|J_{\nu}'(j_{\nu,k_0})|}{j_{\nu,k_0}^{\nu}}\left(\frac{1}{2}+\nu j_{\nu,k_0}\right)^{-1}\left(\frac{2j_{\nu,k_0}}{e^{2j_{\nu,k_0} T}-1}\right)^{\frac{1}{2}}.
\end{align*}
	
Now, we notice that $|J_{\nu}'(j_{\nu,k_0})|$ is bounded from below by a positive constant. On the other hand, also $j_{\nu,k_0}$ is bounded from above and from below by constants depending on $k_0$ (hence, on $u_0$) but uniform with respect to $\mu$. Therefore, there exist another constant $C_1(u_0)$ such that 
\begin{align*}
	\norm{f}{L^2(0,T)} &\geq \frac{C_1(u_0)}{\sqrt{1-4\mu}}\frac{1}{\sqrt{e^{CT}-1}}.
\end{align*}
	
Of course, also the $H^1$ norm of $f$ will satisfy the same estimate, and this concludes our proof.
\end{proof}

\begin{remark}
In the proof of Theorem \ref{cost_thm}, we always assumed $\mu\in[0,1/4]$, which implies $\nu\in[0,1/2]$. We mention that the result can be extended also to negative values of $\mu$, but this has to be done with some more care. Indeed, in this case, according to \eqref{bessel_zero_upper_bound} and \eqref{bessel_zero_lower_bound}, we see that the values $\gamma_{\textrm{min}}$ and $\gamma_{\textrm{max}}$ in \eqref{gap-max} depends also on $\nu$ and, in particular, this gap estimate becomes very bad as $\mu\to -\infty$. In view of that, sharper estimates are needed. They have been provided in \cite{cannarsa2017precise}, and their employment in the analysis of the controllability cost has been presented, e.g., in \cite{cannarsa2015cost}. Since for the problem that we are considering this proof is totally analogous to the one in the aforementioned paper, we leave it to the reader.
\end{remark}

\section{Structure of the targets: proof of Propositons \ref{target_reg_prop} and \ref{reachable_prop}}\label{target_reg_sec}
In this Section, we are going to show that the targets $u_T$ that can be reached from $u_0$ employing an $H^1(0,T)$ control $f$ located at $x=0$ are holomorphic. Once obtained this regularity result, we will also prove that the only target reachable for all values of the coefficient $\mu<1/4$ is $u_T=0$.

\begin{proof}[Proof of Proposition \ref{target_reg_prop}]
Consider the expansion of the target $u_T$ in the basis of the eigenfunctions $\Phi_k$
\begin{align*}
	u_T(x)=\sum_{k\geq 1} \rho_k^T\Phi_k(x)= \sum_{k\geq 1} \rho_k^T\cnk x^{\frac{1}{2}}J_{\nu}(\jnk x).
\end{align*}
Using the definition of the Bessel function $J_{\nu}$ we have
\begin{align*}
	u_T(x)=\sum_{k\geq 1} \rho_k^T\cnk x^{\frac{1}{2}}\left(\sum_{\ell\geq 0} \frac{(-1)^\ell}{\ell!\Gamma(\ell+\nu+1)}\left(\frac{\jnk x}{2}\right)^{2\ell+\nu}\right)=\sum_{k\geq 1} \rho_k^T\cnk x^{\frac{1}{2}}\left(\sum_{\ell\geq 0} d_{\nu,\ell}(\jnk x)^{2\ell+\nu}\right),
\end{align*}
where we defined
\begin{align*}
	d_{\nu,\ell}:= \frac{(-1)^\ell}{\ell!2^{2\ell+\nu}\Gamma(\ell+\nu+1)}.
\end{align*}
Now, formally exchanging the sums we obtain 
\begin{align*}
	u_T(x)=\sum_{\ell\geq 0} d_{\nu,\ell}x^{2\ell+\nu+\frac{1}{2}}\left(\sum_{k\geq 1} \rho_k^T\cnk\jnk^{2\ell+\nu}\right).
\end{align*}
This is precisely in the form
\begin{align*}
	u_T(x)=x^{\nu+\frac{1}{2}}F(x),
\end{align*}
with 
\begin{align*}
	F(x):=\sum_{\ell\geq 0} d_{\nu,\ell}\left(\sum_{k\geq 1} \rho_k^T\cnk\jnk^{2\ell+\nu}\right)x^{2\ell}.
\end{align*}
which is analytic and even.
\end{proof}

Of course, the above argument requires an accurate justification. In particular, we need a rigorous proof of the analyticity of $F$. We will obtain this proof in two steps: firstly, we are going to show that $F$ is holomorphic in the disc $\{z\in\CC,\,|z|<P/\pi\}$. Secondly, we will extend this result in the horizontal strip $\{z\in\CC,\,|\Im z|<P/\pi\}$.

\begin{lemma}\label{holom_disc_lemma}
	If the sequence $(\,\rho_k^Te^{Pk})_{k\geq 1}$ remains bounded, then the function $F$ is holomorphic in the disc $\{z\in\CC,\,|z|<P/\pi\}$.
\end{lemma}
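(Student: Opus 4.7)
The plan is to prove that the power series defining $F$, namely
\[
F(z) = \sum_{\ell \geq 0} d_{\nu,\ell}\,a_\ell\,z^{2\ell}, \qquad a_\ell := \sum_{k\geq 1} \rho_k^T \cnk \jnk^{2\ell+\nu},
\]
has radius of convergence at least $P/\pi$. Showing absolute convergence on the open disc $\{|z|<P/\pi\}$ will simultaneously justify the formal rearrangement of the double series used to define $F$ and deliver the holomorphy statement, since a normally convergent power series defines a holomorphic function. By hypothesis, $|\rho_k^T|\leq M e^{-Pk}$ for some constant $M>0$. From the asymptotic $J_{\nu+1}(\jnk)^2\sim 2/(\pi\jnk)$ established in Section \ref{spectrum_sec} one deduces $\cnk\leq C\sqrt{k}$ uniformly in $k$, while the upper bound \eqref{bessel_zero_upper_bound} gives $\jnk\leq \pi(k+\nu/2)$.

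Combining these three ingredients, the inner coefficient admits the bound
\[
|a_\ell|\leq C_\nu\, \pi^{2\ell+\nu}\sum_{k\geq 1} k^{2\ell+\nu+1/2}\,e^{-Pk},
\]
and comparing the remaining sum with the integral $\int_0^\infty t^s e^{-Pt}\,dt=\Gamma(s+1)/P^{s+1}$ for $s=2\ell+\nu+1/2$ yields
\[
|a_\ell|\leq C_\nu'\,\frac{\pi^{2\ell+\nu}\,\Gamma(2\ell+\nu+3/2)}{P^{2\ell+\nu+3/2}}.
\]
Multiplying by $|d_{\nu,\ell}|=[\ell!\,2^{2\ell+\nu}\Gamma(\ell+\nu+1)]^{-1}$ and by $r^{2\ell}$ reduces the question to estimating
\[
\frac{\Gamma(2\ell+\nu+3/2)}{\ell!\,\Gamma(\ell+\nu+1)}\,\left(\frac{\pi r}{2P}\right)^{2\ell}.
\]
Stirling's formula (or equivalently the Legendre duplication formula applied to $\Gamma(2\ell+\nu+3/2)$) gives $\Gamma(2\ell+\nu+3/2)/[\ell!\,\Gamma(\ell+\nu+1)]\sim C_\nu''\,4^\ell\,\ell^{1/2}$ as $\ell\to\infty$. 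The factor $4^\ell$ exactly cancels $(1/2)^{2\ell}$, so the general term is dominated by $C_\nu'''\,\ell^{1/2}(\pi r/P)^{2\ell}$, which is summable precisely when $r<P/\pi$.

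The main technical delicacy is the last step: only the \emph{sharp} asymptotic equivalence produces the cancellation that leaves the clean factor $(\pi r/P)^{2\ell}$; any use of looser inequalities on $\Gamma(2\ell+\nu+3/2)$ would cost a factor of $e$ and shrink the admissible disc to radius $P/(\pi e)$, which is strictly smaller than what the lemma demands. Once the sharp bound is in hand, normal convergence on every compact subdisc $\{|z|\leq r\}$ with $r<P/\pi$ identifies $F$ with a holomorphic (and even) function on the full open disc $\{|z|<P/\pi\}$, completing the proof.
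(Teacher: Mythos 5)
Your argument is correct and follows essentially the same route as the paper: bound the inner sum $\sum_{k}|\rho_k^T|\cnk\jnk^{2\ell+\nu}$ by a factorial-type expression divided by $P^{2\ell}$ (using $\cnk\sim(\pi\jnk)^{1/2}$ and the upper bound on $\jnk$), combine with the estimate on $d_{\nu,\ell}$, and verify that the resulting power series in $z^2$ has radius of convergence $P^2/\pi^2$ --- the paper invokes a summation lemma from \cite{cannarsa2015cost} and a direct ratio test where you use an integral comparison and the duplication formula, but the substance is identical. One harmless slip: the correct asymptotic is $\Gamma(2\ell+\nu+3/2)/\bigl[\ell!\,\Gamma(\ell+\nu+1)\bigr]\sim C_\nu\,4^{\ell}$ with no extra $\ell^{1/2}$ factor; since you only need an upper bound and the polynomial factor does not alter the radius of convergence, the conclusion stands.
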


\begin{proof}
First of all, we recall that 
\begin{align*}
	\cnk=\frac{\sqrt{2}}{|J_\nu'(\jnk)|}\sim (\pi\jnk)^{\frac{1}{2}},\;\;\textrm{ as }\;k\to+\infty.
\end{align*}
	
Hence, using the bound \eqref{bessel_zero_upper_bound} for the eigenvalue $\jnk$, we have that there exists a constant $C>0$, independent of $\ell$, such that
\begin{align*}
	|\cnk\jnk^{2\ell+\nu}|\leq C\jnk^{\,2\ell+\nu+\frac{1}{2}}\leq C(\pi k)^{2\ell+1}.
\end{align*}
	
Now, according to \cite[Lemma 5.2]{cannarsa2015cost}, there exists another positive constant, that we will denote again $C$, such that 
\begin{align*}
	\forall\,m\in\NN,\;\;\sum_{k\geq 1} k^{2\ell+1}e^{-Pk}\leq C\frac{(2\ell+1)!}{P^{2\ell+2}}.
\end{align*}
Therefore
\begin{align*}
	\sum_{k\geq 1} |\cnk|\jnk^{2\ell+\nu}e^{-Pk}\leq C\pi^{2\ell+1}\frac{(2\ell+1)!}{P^{2\ell+2}}.
\end{align*}
On the other hand, we notice that 
\begin{align*}
	|d_{\nu,\ell}|\leq\frac{1}{(\ell !)^24^{\ell}}.
\end{align*} 
Moreover, an easy computation shows that the radius of convergence of the series
\begin{align*}
	\sum_{k\geq 1} \frac{(2\ell+1)!}{(\ell !)^24^{\ell}}\frac{\pi^{2\ell+1}}{P^{2\ell+2}}x^{\ell}
\end{align*}
is $P^2/\pi^2$. Indeed,
\begin{align*}
	\lim_{\ell\to +\infty} \left(\frac{(2\ell+3)!}{((\ell+1)!))^24^{\ell+1}}\frac{\pi^{2\ell+3}}{P^{2\ell+4}}\right)\left(\frac{(2\ell+1)!}{(\ell !)^24^{\ell}}\frac{\pi^{2\ell+1}}{P^{2\ell+2}}\right)^{-1} = \lim_{\ell\to +\infty} \frac{(2\ell+3)(2\ell+2)}{4(\ell+1)^2}\frac{\pi^2}{P^2}=\frac{\pi^2}{P^2}.
\end{align*}
	
This means that, if $|x|^2<P^2/\pi^2$, assuming that the sequence $(\rho_k^Te^{Pk})_{k\geq 1}$ remains bounded, the series defining $F$ is convergent and this concludes the proof of the Lemma.
\end{proof}

Lemma \ref{holom_disc_lemma} tells us that the function $F$ is holomorphic in a neighborhood of $x=0$. We are now going to extend this result, proving that $F$ is holomorphic on the whole horizontal strip $\{z\in\CC,\,|\Im z|<P/\pi\}$. Firstly, we note that from the definition \eqref{bessel_def} we have
\begin{align*}
	J_{\nu}(x)=x^{\nu}L_\nu(x),
\end{align*}
where we denoted 
\begin{align*}
	L_{\nu}(x)=\sum_{m\geq 0}\frac{(-1)^m}{m!2^{2m+\nu}\Gamma(m+\nu+1)}\,x^{2m}.
\end{align*}

Moreover, we know that this function $L_\nu$ is holomorphic in $\CC$. Hence, from the expression of $u_T$ we have
\begin{align*}
	u_T(x)=\sum_{k\geq 1} \rho_k^T\cnk x^{\frac{1}{2}}J_{\nu}(\jnk x)=\sum_{k\geq 1} \rho_k^T\cnk\jnk^{\nu} x^{\nu+\frac{1}{2}}L_{\nu}(\jnk x)=x^{\nu+\frac{1}{2}}\tilde{F}(x),
\end{align*}
with 
\begin{align*}
	\tilde{F}(x):=\sum_{k\geq 1} \rho_k^T\cnk\jnk^{\nu} L_{\nu}(\jnk x).
\end{align*}
Furthermore, we have the following.
\begin{lemma}[Lemma 8.3 of \cite{cannarsa2015cost}]\label{holom_strip_lemma}
	If the sequence $(\rho_k^Te^{Pk})_{k\geq 1}$ remains bounded, then the function $\tilde{F}$ is holomorphic in the strip $\{z\in\CC,\,|\Im z|<P/\pi\}$.
\end{lemma}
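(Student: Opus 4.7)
The plan is to establish uniform convergence of the series \eqref{F_tilde_def} on every compact subset of the strip $S_P := \{z\in\CC,\,|\Im z|<P/\pi\}$ and then invoke the Weierstrass convergence theorem: since $L_\nu$ is entire, each summand $L_\nu(\jnk\,\cdot)$ is entire in $z$, so a uniform limit on compact sets will automatically be holomorphic on $S_P$. Note that Lemma \ref{holom_disc_lemma} already gives holomorphicity of the same function on the disc $\{|z|<P/\pi\}$ (once one justifies the interchange of summations identifying $\tilde F$ with $F$ there), so the present approach genuinely extends the conclusion to a strictly larger set.

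The key technical ingredient is a global bound on $L_\nu(w)$ with exponential-type control in $|\Im w|$ rather than $|w|$. Starting from the classical Hankel asymptotic
\begin{align*}
J_\nu(w) \sim \sqrt{\frac{2}{\pi w}}\cos\left(w-\frac{\nu\pi}{2}-\frac{\pi}{4}\right), \quad |w|\to+\infty,
\end{align*}
combined with the fact that $J_\nu$ is entire, one obtains an estimate $|J_\nu(w)|\leq C(1+|w|)^{-1/2}e^{|\Im w|}$ valid on all of $\CC$. Since $L_\nu(w)=w^{-\nu}J_\nu(w)$ off the origin and $L_\nu$ is itself entire (hence bounded near $0$), this translates into
\begin{align*}
|L_\nu(w)| \leq \frac{C_\nu}{(1+|w|)^{\nu+1/2}}\,e^{|\Im w|}, \quad w\in\CC,
\end{align*}
after adjusting the constant to absorb the values near the origin. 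Now fix a compact $K\subset S_P$ contained in $\{|z|\leq R,\,|\Im z|\leq P/\pi-\ep\}$ for some $R>0$ and $\ep>0$. Using $\cnk\sim(\pi\jnk)^{1/2}$, the zero bound $\jnk\leq\pi(k+\nu/2)$ from Section \ref{spectrum_sec}, and the hypothesis $|\rho_k^T|\leq C_0 e^{-Pk}$, each term of \eqref{F_tilde_def} is dominated on $K$ by
\begin{align*}
|\rho_k^T\cnk\jnk^\nu L_\nu(\jnk z)| \leq C\,|\rho_k^T|\,\jnk^{\nu+1/2}\,e^{\jnk|\Im z|} \leq C'\,k^{\nu+1/2}\,e^{(\pi|\Im z|-P)k} \leq C''\,k^{\nu+1/2}\,e^{-\pi\ep k},
\end{align*}
which is summable and uniform in $z\in K$. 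Weierstrass' theorem then yields the desired holomorphicity on $S_P$.

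The main obstacle is the derivation of the $|\Im w|$-exponential bound on $L_\nu$: only this sharp control ensures that the strip width $P/\pi$ matches the decay rate $e^{-Pk}$ of the Fourier coefficients, since $\jnk|\Im z|$ and $Pk$ compete linearly in $k$. A cruder bound of the form $|J_\nu(w)|\leq Ce^{|w|}$ would yield holomorphicity only on the real axis, not on a horizontal neighbourhood of it. The refined estimate is classical and can be extracted from Sonine--Mehler type integral representations of $J_\nu$, or from the Hankel expansion combined with Phragm\'en--Lindel\"of considerations applied to the entire function $w\mapsto w^{1/2}J_\nu(w)$. A parallel argument is carried out in \cite[Lemma 8.3]{cannarsa2015cost} for the related degenerate-coefficient problem, and it is to this reference that one may appeal for the detailed execution of the bound on $L_\nu$.
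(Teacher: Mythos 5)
The paper offers no proof of this lemma at all, simply citing Lemma 8.3 of \cite{cannarsa2015cost}; your argument --- uniform convergence on compact subsets of the strip via the exponential-type bound $|L_\nu(w)|\leq C_\nu(1+|w|)^{-\nu-\frac12}e^{|\Im w|}$ (legitimately reduced to $\Re w\geq 0$ since $L_\nu$ is even), followed by Weierstrass' convergence theorem --- is correct and is essentially the proof carried out in that reference. The only slip is in your side remark: a crude bound $|L_\nu(w)|\leq Ce^{|w|}$ would still yield holomorphy on the disc $\{|z|<P/\pi\}$ of Lemma \ref{holom_disc_lemma}, not merely on the real axis; the genuine gain from the $e^{|\Im w|}$ control is the passage from that disc to the full horizontal strip.
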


Lemma \ref{holom_strip_lemma}, of course, concludes the proof of our result. Moreover, we note that due to analyticity reasons $F$ and $\tilde{F}$ coincide.

\begin{proof}[Proof of Proposition \ref{reachable_prop}]
The proof follows the one of \cite[Proposition 2.5(b)]{cannarsa2015cost}. First of all, from Proposition \ref{target_reg_prop} we have that, if $u_T \in\mathcal{P}_{\mu,T}$, then 
\begin{align*}
	u_T(x)=x^{\frac{1}{2}+\frac{1}{2}\sqrt{1-4\mu}}F(x),
\end{align*}
with $F$ holomorphic. Moreover, if $u_T$ is in the intersection of the $\mathcal{P}_{\mu,T}$, this is true for all $\mu<1/4$.
	
Now, if $u_T$ is not zero, let $\kappa_{\mu}$ be the first integer such that the $\kappa_{\mu}$-th derivative $F^{(\kappa_{\mu})}(0)\neq 0$. Then, we have 
\begin{align*}
	u_T(x) \sim \frac{F^{(\kappa_{\mu})}(0)}{\kappa_{\mu}!}x^{\kappa_{\mu}+\frac{1}{2}+\frac{1}{2}\sqrt{1-4\mu}}, \;\;\; \textrm{ as } \; x\to 0^+.
\end{align*}
	
If now we take another value $\mu'<1/4$, then we have the same behavior for $u_T$ close to $x=0$, but this time for the corresponding value $\kappa_{\mu'}$. This, of course, is possible only if the exponents $\kappa_{\mu}$ and $\kappa_{\mu'}$ are the same. Hence, the quantity 
\begin{align*}
	\kappa_{\mu}+\frac{1}{2}+\frac{1}{2}\sqrt{1-4\mu}
\end{align*}
has to remain constant on $(-\infty,1/4)$. Denote $M$ this constant. Then $\kappa_{\mu} = M -\frac{1}{2}-\frac{1}{2}\sqrt{1-4\mu}$, which implies that $\mu \mapsto \kappa_{\mu}$ is continuous.
On the other hand, $\kappa_{\mu}$ is an integer, hence it has to remain constant with respect to $\mu$. Then, the quantity 
\begin{align*}
	\kappa_{\mu}+\frac{1}{2}+\frac{1}{2}\sqrt{1-4\mu}
\end{align*}
has to remain constant, with some uniform $\kappa$ not depending on $\mu$. This is, of course, is a contradiction.
\end{proof}

\section{Open problems}\label{open_pb}

In the present paper we analyzed the boundary controllability for the one-dimensional heat equation
\begin{align*}
u_t-u_{xx}-\frac{\mu}{x^2}u =0, \;\; (x,t)\in (0,1)\times (0,T),
\end{align*}
acting with a control located at the boundary point $x=0$, where the singularity arises. We present here some open problem and perspective related to our work.

\begin{enumerate}
	\item In this article, the controllability of \eqref{heat_hardy_nhb} has been addressed by employing the moment method. It is however natural to wonder whether other techniques, such as a Lebeau-Robbiano strategy or Carleman estimates could apply in this context. 
	
	Since we have an explicit knowledge of the spectrum of the operator \eqref{singular_op}, we believe that a Lebeau-Robbiano approach could be used without significant difficulties for analyzing control properties of \eqref{heat_hardy_nhb}. 
	
	On the other hand, Carleman estimates techniques would certainly be a more delicate issue. We already mentioned that Carleman estimates for heat equations with singular potentials have successfully been employed with control purposes in several works (\cite{cazacu2014controllability,ervedoza2008control,vancostenoble2009hardy,vancostenoble2011improved}). Nevertheless, these results do not extend to our problem, in which we aim to locate the control on the singularity point. A suitable estimate for studying the controllability of \eqref{heat_hardy_nhb} should instead take into account the fact that the normal derivative of the solution of the equation degenerates approaching $x=0$. However, this is not an easy problem. Since we showed that the first derivative of the solution of \eqref{heat_hardy_nhb} behaves as $x^{-\frac{1}{2}+\frac{1}{2}\sqrt{1-4\mu}}$ when $x\to 0^+$, and since we are interested in taking measurements exactly at that point, we have to choose carefully the weight that we shall employ in the Carleman estimate. 
	
	We believe that this weight should be in the form $\sigma(x,t)=\theta(t)p(x)$, with a function $p$ involving $x^{\,2\lambda+1}$ as leading term, with $\lambda=\frac{1}{2}-\frac{1}{2}\sqrt{1-4\mu}$. 	Nevertheless, this choice appears not to be a suitable one for all values of the coefficient $\mu$, since the quantity $2\lambda+1$ becomes negative for $\mu<-3/4$, hence producing a weight $\sigma$ which is not bounded approaching the boundary. On the other hand, to understand which function could allow to obtain the right boundary term in the inequality, without generating singularities, is not an elementary issue.
	
	To obtain a Carleman estimate for \eqref{heat_hardy_nhb} is, therefore, a very fascinating and challenging problem, which would have many applications and extensions. Just to mention one, it would be interesting to analyze whether it is possible to consider  variants of \eqref{heat_hardy_nhb} involving a source term, in order to then address non-linear problems.
	
	\item The problem treated in this article, apart from being interesting by itself,  is also a preliminary step for the analysis of a more general issue, the one of the boundary controllability of the following heat equation 
	\begin{align}\label{1d_heat_2p}
		u_t-u_{xx}-\frac{\mu_1}{x^2}u-\frac{\mu_2}{(1-x)^2}u=0, \;\;\;\; (x,t)\in(0,1)\times(0,T),
	\end{align}
	involving a singular inverse-square potential whose singularities arise all over the boundary of the space domain $(0,1)$. 
	
	First of all, we mention that this problem cannot be treated with the moment method, since in this case with two singular potentials we do not have an explicit knowledge of the spectrum of the operator. We believe that an approach that could be successful would be to derive an appropriate Carleman estimate for the adjoint problem associated to \eqref{1d_heat_2p}. On the other hand, as we were discussing in point 1, this issue is far from being trivial.
	
	\item Related to equation \eqref{1d_heat_2p}, a natural question would be the following:
	can we deduce a controllability result for an equation degenerating at both endpoints like
	\begin{align}\label{heat_2_deg}
		u_t - \Big(x^{\,\alpha_1}(1-x)^{\,\alpha_2}\Big)_x=0,\;\;\; (x,T)\in (0,1)\times (0,T),
	\end{align} 
	with $\alpha_1, \alpha_2\in [0,1)$ and Dirichlet control at the degenerate point?
	
	This problem has already been solved in a case of a distributed control supported in an open subset $\omega\subset (0,1)$ (see, e.g., \cite{martinez2006carleman}). Therefore, the issue of analyzing boundary controllability arises naturally. The problem is delicate however.
	Indeed, also in this case the moment method would fail, since the spectrum of the operator $\Big(x^{\,\alpha_1}(1-x)^{\,\alpha_2}\Big)_x$ on $(0,1)$ with Dirichlet boundary conditions is not explicit. Therefore, the best approach would be, most likely, to prove a Carleman estimate for the adjoint equation associated to \eqref{heat_2_deg}. On the other hand, this problem is definitively not simple and, in our opinion, one should expect difficulties similar to the ones described before for the case of an equation with two singular potentials. 
	Finally, we mention that, even if the two problems \eqref{1d_heat_2p} and \eqref{heat_2_deg} seem to be in some sense related, possibly this is not completely true. Indeed, the presence of two degeneracies (or, analogously, of two singular potentials) makes extremely difficult to identify a change of variables, assuming that it exists, which is able to transform \eqref{heat_2_deg} in \eqref{1d_heat_2p} and vice-versa. Therefore, it is not to be excluded that the two problems are actually hiding difficulties of different nature and, for this reason, have to be studied separately.   
	
	\item The equation \eqref{1d_heat_2p} that we introduced before is a one-dimensional prototype of a more general one, namely
	\begin{align*}
		u_t-\Delta u-\frac{\mu}{\delta^2}u=0, \;\; (x,t)\in\Omega\times (0,T),	
	\end{align*}
	where $\omega\subset\RR^N$ is a bounded and regular domain and $\delta:= \textrm{dist}(x,\partial\Omega)$ is the distance to the boundary function. We already know (see \cite{biccari2016null}) that this equation is null-controllable with an interior control distributed in an open subset $\omega\subset\Omega$. Therefore, the analysis of boundary controllability properties for this model is a natural extension of the results of \cite{biccari2016null} and a very interesting problem.
	
	\item The moment method has also been successfully applied for treating the controllability of one-dimensional systems of coupled parabolic equations (see, e.g., \cite{fernandez2010boundary}). In particular, there the authors present a precise construction of suitable biorthogonal families, which are then applied to prove the observability of the adjoint by means of an Ingham-type approach. To the best of our knowledge, the aforementioned paper has never been extended to the analysis of the controllability of coupled system of parabolic equations with singular potentials. As a first step in this direction, we propose to focus on a system of the type 
	\begin{align}\label{coupled_syst}
		\begin{cases}
			\displaystyle u_t - u_{xx}-\frac{\mu}{x^2}u = Au, & (x,t)\in Q
			\\[6pt]
			u(0,t) = 0, \;\;\; u(1,t) = Bv,	& t\in(0,T)
			\\[6pt]
			u(x,0) = u_0(x), & x\in(0,1),
		\end{cases}
	\end{align}
	with $u=(u_1,u_2)^T$, and where $A$ and $B$ are, respectively, a suitable coupling matrix and a control operator, chosen so that the Kalman rank condition is satisfied. 
	
	Of course, \eqref{coupled_syst} is a very simple toy model, and other kinds of coupled systems could be considered. In particular, it would certainly be interesting to study the case in which the coupling is done in the singular terms.
	
	\item Finally, it would be interesting to study the problem of controllability to the trajectories for heat equations with singular potentials. Of course, for the case of system \eqref{heat_hardy_nhb} that we are considering in this paper, this is a straightforward consequence of Theorem \ref{control_thm}, since we are in a linear setting. Nevertheless, the situation would change if one considered a nonlinear framework. In this case, the techniques developed in this article cannot be applied, not even after a linearization, since the knowledge of the spectrum is not explicit. In view of that, a different approach has to be attempted, the most natural one being the employment of Carleman estimates. Actually, once one has this tool, we believe that  controllability to the trajectories can be obtained relatively easily, following the classical approach of \cite{fernandez2000null} (see also \cite{araruna2015stackelberg,hernandez2018robust}). This would then apply to the controllability of a nonlinear version of \eqref{heat_hardy_nhb}, both from the interior and from the boundary point away from the singularity. On the other hand, if one would study the same control problem acting from the singularity point, the preliminary difficulty of obtaining a Carleman estimate would again appear immediately.
\end{enumerate}

{\appendix
\section{Link with equations with degenerate coefficients}\label{appendix}
As we mentioned in the introduction, the class of equations analyzed in this work can be related to another type of problems, namely evolution PDEs with variable degenerate coefficients. In more detail, we know that there exists an appropriate change of variables that allows to transform our original equation \eqref{heat_hardy_nhb} in the following one:
\begin{align*}
	\phi_t-(\xi^{\,\beta}\phi_{\xi})_{\xi} = 0, \;\;\; (\xi,t)\in (0,\xi_0)\times(0,T),\;\;\; \xi_0:=\left(\frac{2-\beta}{2}\right)^{\frac{2}{2-\beta}}.
\end{align*}
	
For the sake of completeness, we now present this change of variables. First of all, let us introduce a new function $\phi(x,t)$ defined as 
\begin{align*}
	\phi(x,t)=x^{-\frac{\,\beta}{2(2-\beta)}}u(x,t),
\end{align*}
with
\begin{align}\label{beta}
	\beta=\beta(\,\mu):=\frac{2+8\mu-2\sqrt{1-4\mu}}{3+4\mu}.
\end{align}
Starting from \eqref{heat_hardy_nhb}, we get
\begin{align*}
	x^{\frac{\beta}{2(2-\beta)}}\left\{\phi_t-\phi_{xx}-\frac{\beta}{2-\beta}\frac{\phi_x}{x}-\left[\mu+\frac{\beta}{2(2-\beta)}\left(\frac{\beta}{2(2-\beta)}-1\right)\right]\frac{\phi}{x^2}\right\}=0.
\end{align*}
	
\noindent Moreover, it simply a matter of computation to show that, taking $\beta$ as in \eqref{beta} we have 
\begin{align*}
	\mu+\frac{\beta}{2(2-\beta)}\left(\frac{\beta}{2(2-\beta)}-1\right)=0
\end{align*}
and
\begin{align*}
	\frac{\beta}{2(2-\beta)}=\alpha,
\end{align*}
with $\alpha$ as in \eqref{alpha}. Hence, we obtain the equation
\begin{align*}
	\begin{cases}
		\displaystyle \phi_t-\phi_{xx}-\frac{\beta}{2-\beta}\frac{\phi_x}{x}=0, & (x,t)\in Q
		\\
		\phi(0,t)=f(t),\;\;\phi(1,t)=0, & t\in(0,T)
		\\
		\phi(x,0)=x^{-\frac{\beta}{2(2-\beta)}}u(x,0):=\phi_0(x), & x\in(0,1).
	\end{cases}
\end{align*}
Now, let us introduce the new variable 
\begin{align*}
	\xi:=\xi_0 x^{\frac{2}{2-\beta}},\;\;\;\textrm{ with }\;\;\;\xi_0:=\left(\frac{2-\beta}{2}\right)^{\frac{2}{2-\beta}}.
\end{align*}

First of all, we notice that, for $x\in(0,1)$, we have $\xi\in(0,\xi_0)$. Moreover, it is straightforward to check that
\begin{align*}
	\frac{d^2}{dx^2}=\xi^{\,\beta}\frac{d^2}{d\xi^2}+\frac{\beta}{2}\xi^{\,\beta-1}\frac{d}{d\xi}.
\end{align*}
	
\noindent Thus, we finally obtain the following equation with variable degenerate coefficients
\begin{align}\label{deg_pb_xi}
	\begin{cases}
		\displaystyle \phi_t-(\xi^{\,\beta}\phi_{\xi})_{\xi}=0, & (\xi,t)\in (0,\xi_0)\times(0,T)
		\\
		\phi(0,t)=f(t),\;\;\phi(\xi_0,t)=0, & t\in(0,T)
		\\
		\phi(\xi,0)=\left(\frac{\xi}{\xi_0}\right)^{-\frac{\beta}{4}}\phi_0\left(\left(\frac{\xi}{\xi_0}\right)^{\frac{2-\beta}{2}}\right):=\phi_1(\xi), & \xi\in(0,\xi_0).
	\end{cases}
\end{align}
We conclude by observing the two following facts:
\begin{enumerate}
	\item According to \cite{cannarsa2015cost}, the controllability of \eqref{deg_pb_xi} acting from $\xi=0$ can be obtained only for $\beta\in(0,1)$. This, according to \eqref{beta}, corresponds to $\mu\in (0,1/4)$. Therefore, our controllability result Theorem \ref{control_thm} cannot be obtained directly from the results of \cite{cannarsa2015cost} employing the change of variables above presented.
		
	\item For $\mu=1/4$, we have $\beta=1$. In this case, it is well known that the Dirichlet boundary condition $\phi(0,t)=f(t)$ does not makes sense, since it cannot be defined a trace at $\xi=0$ for the solutions to \eqref{deg_pb_xi}. Instead, one has to consider a boundary condition of Neumann type, namely $(\xi^{\,\beta}\phi_{\xi})(0,t)=f(t)$. More details can be found, e.g., in \cite{cannarsa2005null,cannarsa2008carleman,martinez2006carleman,vancostenoble2011improved}.
		
	This justifies the fact that we are not considering the critical case $\mu=1/4$ while analyzing the well-posedness of our original problem \eqref{heat_hardy_nhb}. In this case, it is possible that the problem is well-posed if we impose a different boundary condition at $x=0$, namely the one corresponding to the Neumann boundary condition mentioned above, applying the inverse change of variable to \eqref{deg_pb_xi}. In any case, we will not investigate this fact in the present paper.
\end{enumerate}
}

\section*{Acknowledgements}
The author wishes to acknowledge Piermarco Cannarsa (Universit\`a degli Studi di Roma II - Tor Vergata), for interesting discussions on the topics treated in this paper. A special thanks also to Sylvain Ervedoza (Universit\'e Paul Sabatier (Toulouse)), for his careful revision of the previous version of this manuscript and for his help in detecting and fixing some mistakes in my original work. 

\bibliography{biblio}

\begin{thebibliography}{10}

\bibitem{araruna2015stackelberg}
{\sc Araruna, F., Fern{\'a}ndez-Cara, E., and Santos, M.}
\newblock Stackelberg-{N}ash exact controllability for linear and semilinear
  parabolic equations.
\newblock {\em ESAIM: COCV 21}, 3 (2015), 835--856.

\bibitem{baras1984heat}
{\sc Baras, P., and Goldstein, J.~A.}
\newblock The heat equation with a singular potential.
\newblock {\em Trans. Amer. Math. Soc. 284}, 1 (1984), 121--139.

\bibitem{berestycki1997existence}
{\sc Berestycki, H., and Esteban, M.~J.}
\newblock Existence and bifurcation of solutions for an elliptic degenerate
  problem.
\newblock {\em J. Differential Equations 134}, 1 (1997), 1--25.

\bibitem{biccari2016null}
{\sc Biccari, U., and Zuazua, E.}
\newblock Null controllability for a heat equation with a singular
  inverse-square potential involving the distance to the boundary function.
\newblock {\em J. Differential Equations 261\/} (2016), 2809--2853.

\bibitem{cannarsa2005null}
{\sc Cannarsa, P., Martinez, P., and Vancostenoble, J.}
\newblock Null controllability of degenerate heat equations.
\newblock {\em Adv. Diff. Eq. 10}, 2 (2005), 153--190.

\bibitem{cannarsa2008carleman}
{\sc Cannarsa, P., Martinez, P., and Vancostenoble, J.}
\newblock Carleman estimates for a class of degenerate parabolic operators.
\newblock {\em SIAM J. Control Optim. 47}, 1 (2008), 1--19.

\bibitem{cannarsa2016global}
{\sc Cannarsa, P., Martinez, P., and Vancostenoble, J.}
\newblock {\em Global Carleman estimates for degenerate parabolic operators
  with applications}, vol.~239.
\newblock American Mathematical Society, 2016.

\bibitem{cannarsa2015cost}
{\sc Cannarsa, P., Martinez, P., and Vancostenoble, J.}
\newblock The cost of controlling weakly degenerate parabolic equations by
  boundary controls.
\newblock {\em Mat. Control Relat. Fields 7}, 7 (2017), 171--211.

\bibitem{cannarsa2017precise}
{\sc Cannarsa, P., Martinez, P., and Vancostenoble, J.}
\newblock Precise estimates for biorthogonal families under asymptotic gap
  conditions.
\newblock {\em arXiv preprint arXiv:1706.02435\/} (2017).

\bibitem{cannarsa2018cost}
{\sc Cannarsa, P., Martinez, P., and Vancostenoble, J.}
\newblock The cost of controlling strongly degenerate parabolic equations.
\newblock {\em arXiv preprint arXiv:1801.01380\/} (2018).

\bibitem{cannarsa2012unique}
{\sc Cannarsa, P., Tort, J., and Yamamoto, M.}
\newblock Unique continuation and approximate controllability for a degenerate
  parabolic equation.
\newblock {\em Appl. Anal. 91}, 8 (2012), 1409--1425.

\bibitem{cazacu2012schrodinger}
{\sc Cazacu, C.}
\newblock Schr\"odinger operators with boundary singularities: {H}ardy
  inequality, {P}ohozaev identity and controllability results.
\newblock {\em J. Funct. Anal. 263}, 12 (2012), 3741--3783.

\bibitem{cazacu2014controllability}
{\sc Cazacu, C.}
\newblock Controllability of the heat equation with an inverse-square potential
  localized on the boundary.
\newblock {\em SIAM J. Control Optim. 52}, 4 (2014), 2055--2089.

\bibitem{ervedoza2008control}
{\sc Ervedoza, S.}
\newblock Control and stabilization properties for a singular heat equation
  with an inverse-square potential.
\newblock {\em Comm. Part. Diff. Eq. 33}, 11 (2008), 1996--2019.

\bibitem{fattorini1971exact}
{\sc Fattorini, H.~O., and Russell, D.~L.}
\newblock Exact controllability theorems for linear parabolic equations in one
  space dimension.
\newblock {\em Arch. Rat. Mech. Anal. 43}, 4 (1971), 272--292.

\bibitem{fattorini1974uniform}
{\sc Fattorini, H.~O., and Russell, D.~L.}
\newblock Uniform bounds on biorthogonal functions for real exponentials with
  an application to the control theory of parabolic equations.
\newblock {\em Quarterly of Applied Mathematics 32}, 1 (1974), 45--69.

\bibitem{fernandez2010boundary}
{\sc Fern{\'a}ndez-Cara, E., Gonz{\'a}lez-Burgos, M., and de~Teresa, L.}
\newblock Boundary controllability of parabolic coupled equations.
\newblock {\em J. Funct. Anal. 259}, 7 (2010), 1720--1758.

\bibitem{fernandez2000cost}
{\sc Fern{\'a}ndez-Cara, E., and Zuazua, E.}
\newblock The cost of approximate controllability for heat equations: the
  linear case.
\newblock {\em Adv. Differ. Equ. 5}, 4-6 (2000), 465--514.

\bibitem{fernandez2000null}
{\sc Fern{\'a}ndez-Cara, E., and Zuazua, E.}
\newblock Null and approximate controllability for weakly blowing up semilinear
  heat equations.
\newblock In {\em Ann. Inst. H. Poincar{\`e} Anal. Non Lin{\`e}aire\/} (2000),
  vol.~17, Elsevier, pp.~583--616.

\bibitem{azorero1998hardy}
{\sc García~Azorero, J., and Peral~Alonso, I.}
\newblock Hardy inequalities and some critical elliptic and parabolic problems.
\newblock {\em J. Differential Equations 144}, 2 (1998), 441--476.

\bibitem{giri2008electron}
{\sc Giri, P.~R., Gupta, K.~S., Meljanac, S., and Samsarov, A.}
\newblock Electron capture and scaling anomaly in polar molecules.
\newblock {\em Phys. Lett. A 372}, 17 (2008), 2967--2970.

\bibitem{gueye2014exact}
{\sc Gueye, M.}
\newblock Exact boundary controllability of 1-{D} parabolic and hyperbolic
  degenerate equations.
\newblock {\em SIAM J. Control Optim. 52}, 4 (2014), 2037--2054.

\bibitem{hernandez2018robust}
{\sc Hern{\'a}ndez-Santamar{\'\i}a, V., and de~Teresa, L.}
\newblock Robust {S}tackelberg controllability for linear and semilinear heat
  equations.
\newblock {\em Evol. Equ. Control Theory 7}, 2 (2018), 247--273.

\bibitem{komornik2005fourier}
{\sc Komornik, V., and Loreti, P.}
\newblock {\em Fourier series in control theory}.
\newblock Springer Science \& Business Media, 2005.

\bibitem{landau2000bessel}
{\sc Landau, L.}
\newblock Bessel functions: monotonicity and bounds.
\newblock {\em Journal of the London Mathematical Society 61}, 1 (2000),
  197--215.

\bibitem{lebedev1972special}
{\sc Lebedev, N.~N., and Silverman, R.~A.}
\newblock {\em Special functions and their applications}.
\newblock Courier Corporation, 1972.

\bibitem{lorch2008monotonic}
{\sc Lorch, L., and Muldoon, M.~E.}
\newblock Monotonic sequences related to zeros of {B}essel functions.
\newblock {\em Numerical Algorithms 49}, 1-4 (2008), 221--233.

\bibitem{martinez2017ccost}
{\sc Martinez, P., and Vancostenoble, J.}
\newblock The cost of boundary controllability for a parabolic equation with
  inverse square potential.
\newblock {\em Submitted\/}.

\bibitem{martinez2006carleman}
{\sc Martinez, P., and Vancostenoble, J.}
\newblock Carleman estimates for one-dimensional degenerate heat equations.
\newblock {\em J. Evol. Equ. 6}, 2 (2006), 325--362.

\bibitem{vancostenoble2011improved}
{\sc Vancostenoble, J.}
\newblock Improved {H}ardy-{P}oincar{\'e} inequalities and sharp {C}arleman
  estimates for degenerate/singular parabolic problems.
\newblock {\em Discrete Contin. Dyn. Syst. Ser. S 4}, 3 (2011), 761--790.

\bibitem{vancostenoble2008null}
{\sc Vancostenoble, J., and Zuazua, E.}
\newblock Null controllability for the heat equation with singular
  inverse-square potentials.
\newblock {\em J. Funct. Anal. 254}, 7 (2008), 1864--1902.

\bibitem{vancostenoble2009hardy}
{\sc Vancostenoble, J., and Zuazua, E.}
\newblock Hardy inequalities, observability, and control for the wave and
  {S}chr\"odinger equations with singular potentials.
\newblock {\em SIAM J. Math. Anal. 41}, 4 (2009), 1508--1532.

\bibitem{vazquez2000hardy}
{\sc V{\'a}zquez, J.~L., and Zuazua, E.}
\newblock The {H}ardy inequality and the asymptotic behaviour of the heat
  equation with an inverse-square potential.
\newblock {\em J. Funct. Anal. 173}, 1 (2000), 103--153.

\bibitem{watson1995treatise}
{\sc Watson, G.~N.}
\newblock {\em A treatise on the theory of {B}essel functions}.
\newblock Cambridge university press, 1995.

\end{thebibliography}

\end{document}